\colorlet{refkey}{pink!90!red}
\colorlet{labelkey}{JungleGreen!80!yellow}
\crefname{thm}{Theorem}{Theorems}
\newtheorem{lemma}{Lemma}[section]
\crefname{lemma}{Lemma}{Lemmas}
\crefname{cor}{Corollary}{Corollaries}
\theoremstyle{definition}
\newtheorem{scheme}{Scheme}
\crefname{scheme}{Scheme}{Schemes}
\newtheorem{remark}{Remark}[section]
\crefname{remark}{Remark}{Remarks}
\newtheorem{definition}{Definition}
\crefname{definition}{Definition}{Definitions}
\newtheorem{example}{Example}
\crefname{example}{Example}{Examples}
\crefname{figure}{Figure}{Figures}
\numberwithin{equation}{section}
\newcommand{\rd}{\mathrm{d}}
\newcommand{\bgamma}{{\bm{\upgamma}}}
\newcommand{\bnu}{\bm{\upnu}}
\newcommand{\btau}{\bm{\uptau}}
\newcommand{\grad}{\bm{\partial}}
\newcommand{\gradd}{\bm{\partial}_\rd}
\newcommand{\dtau}{\btau_\rd}
\newcommand{\bargamma}{{\bar{\bgamma}}}
\newcommand{\barg}{{\bar{g}}}
\newcommand{\bT}{\hat{\mathbf{T}}}
\DeclarePairedDelimiterXPP{\inner}[3]{}{\lparen}{\rparen}{_{#3}}{#1,#2}
\NewDocumentCommand{\innergrad}{s m m}{%
    \IfBooleanTF{#1}{%
        \inner*{\grad #2}{\grad #3}{\bgamma}
    }{%
        \inner{\grad #2}{\grad #3}{\bgamma}
    }%
}
\NewDocumentCommand{\innern}{s m m}{%
    \IfBooleanTF{#1}{%
        \inner*{#2}{#3}{\bgamma^{n-1/2}}
    }{%
        \inner{#2}{#3}{\bgamma^{n-1/2}}
    }%
}
\NewDocumentCommand{\innergradd}{s m m}{%
    \IfBooleanTF{#1}{%
        \innern*{\gradd #2}{\gradd #3}
    }{%
        \innern{\gradd #2}{\gradd #3}
    }%
}
\NewDocumentCommand{\innergrada}{s m m}{%
    \IfBooleanTF{#1}{%
        \inner*{\gradd #2}{\gradd #3}{}
    }{%
        \inner{\gradd #2}{\gradd #3}{}
    }%
}
\DeclarePairedDelimiter{\bracket}{\lbrack}{\rbrack}
\DeclarePairedDelimiter{\paren}{\lparen}{\rparen}
\newcommand{\bR}{\mathbb{R}}
\newcommand{\bN}{\mathbb{N}}
\newcommand{\bP}{\mathbf{P}}
\newcommand{\bV}{\mathbf{V}}
\newcommand{\bv}{\mathbf{v}}
\newcommand{\bw}{\mathbf{w}}
\newcommand{\dt}{\Delta t}
\title{Structure-preserving numerical methods for constrained gradient flows of planar closed curves with explicit tangential velocities}
\author{
    Tomoya Kemmochi\thanks{Graduate School of Engineering, Nagoya University. E-mail: \texttt{kemmochi@na.nuap.nagoya-u.ac.jp}}, 
    Yuto Miyatake\thanks{Cybermedia Center, Osaka University.}, 
    and 
    Koya Sakakibara\thanks{Department of Applied Mathematics, Faculty of Science, Okayama University of Science.} \thanks{RIKEN iTHEMS}
}
\begin{document}
\mleftright

\maketitle

\begin{abstract}
    In this paper, we consider numerical approximation of constrained gradient flows of planar closed curves, including the Willmore and the Helfrich flows. 
    These equations have energy dissipation and the latter has conservation properties due to the constraints.
    We will develop structure-preserving methods for these equations that preserve both the dissipation and the constraints.
    To preserve the energy structures, we introduce the discrete version of gradients according to the discrete gradient method and determine the Lagrange multipliers appropriately.
     We directly address higher order derivatives by using the Galerkin method with B-spline curves to discretize curves. 
    Moreover, we will consider stabilization of the schemes by adding tangential velocities.
    We introduce a new Lagrange multiplier to obtain both the energy structures and the stability.
    Several numerical examples are presented to verify that the proposed schemes preserve the energy structures with good distribution of control points. 
\end{abstract}

\section{Introduction}

In this paper, we consider numerical approximation of a geometric evolution equation of a planar closed curve that is described as a constrained $L^2$-gradient flow
\begin{equation}
    \bgamma_t = - \grad F_0 + \sum_{j=1}^J \lambda_j \grad F_j,
    \label{eq:intro-gf}
\end{equation}
where $\bgamma = \bgamma(u,t) \in \bR^2$ is an evolving planar closed curve with parametrization $u \in [0,1]$, 
$F_j = F_j[\bgamma]$ ($j=0,1,\dots,J$) is a functional defined for $\bgamma$, $\grad F_j = \grad F_j(\bgamma)$ is the gradient of $F_j$ in $L^2(\bgamma)$,
and $\lambda_j$ ($j=1,\dots,J$) is the Lagrange multiplier determined to fulfill the constraint that $F_j[\bgamma(t)]$ is conserved.
The number $J$ of constraints may be zero.
Typical examples that we keep in mind are the Willmore flow (or the elastic flow)
\begin{equation}
    \bgamma_t = \paren*{-2 k_{ss} - k^3 + k_0 k} \bnu
    \label{eq:intro-Willmore}
\end{equation}
with a given constant $k_0 \ge 0$, and the Helfrich flow 
\begin{equation}
    \bgamma_t =\paren*{-2 k_{ss} - k^3  -\lambda - \mu k} \bnu
    \label{eq:intro-Helfrich}
\end{equation}
with $\lambda$ and $\mu$ determined through the linear equation 
\begin{equation}
    \begin{bmatrix}
        \langle 1 \rangle & \langle k \rangle \\ 
        \langle k \rangle & \langle k^2 \rangle 
    \end{bmatrix}
    \begin{bmatrix}
        \lambda \\ \mu
    \end{bmatrix}
    = 
    \begin{bmatrix}
        -\langle k^3 \rangle \\ 
        2\langle k_s^2 \rangle - \langle k^4 \rangle 
    \end{bmatrix}.
\end{equation}
Here, 
$s$ is the arc-length parameter of $\bgamma$,
$\bnu$ is the inward unit normal vector of $\bgamma$,
$k$ is the curvature of $\bgamma$,
and $\langle f \rangle = \int_\bgamma f ds$ is the average of a scalar function $f$ over $\bgamma$.

The Willmore flow is the $L^2$-gradient flow of the elastic energy defined by 
\begin{equation}
    E[\bgamma] \coloneqq B[\bgamma] + k_0 L[\bgamma],
    \qquad B[\bgamma] \coloneqq \int_\bgamma k^2 ds, \quad L[\bgamma] \coloneqq \int_\bgamma ds,
\end{equation}
where $B$ is called the bending energy and $L$ is the length of $\bgamma$.
The critical points of the elastic energy are called Euler's elasticae, which have been studied since Bernoulli and Euler.
Hence the Willmore flow is one of the ways to find the elastica.
For the detail, we refer the reader to \cite{DKS02,BenMOS09,MPP21} and references therein.
The Helfrich flow of curves is the $L^2$-gradient flow of the bending energy $B$ under the constraints that both the length and the enclosed area of $\bgamma$ are fixed.
This flow is inspired by \cite{Hel73}, which models the shape of red blood cells as the minimizer of the bending energy of closed surfaces with fixed volume and surface area.
Furthermore, it is known that the Helfrich flow is related to shape optimization problems.
See \cite{KN06,KN0607} and also references in \cite{MPP21} for details.
We here remark that these are fourth order nonlinear evolution equations.

The aim of this paper is twofold:
\begin{itemize}
    \item To construct numerical schemes that preserve the dissipation of $F_0$ and the constraint for $F_j$ ($j \ge 1$).
    \item To stabilize the above scheme by introducing tangential velocities.
\end{itemize}
We will introduce these two topics.

\subsection{Energy structure of gradient flows}

The (constrained) gradient flow \eqref{eq:intro-gf} has energy structures, namely, dissipation of $F_0$ and conservation of $F_j$'s.
For example, when $J=1$, the Lagrange multiplier $\lambda_1$ is determined by $\lambda_1 = \innergrad{F_0}{F_1}/\innergrad{F_1}{F_1}$ to fulfill the constraint $\frac{d}{dt}F_1[\bgamma] = 0$, where $\inner{\cdot}{\cdot}{\bgamma}$ is the $L^2$-inner product over $\bgamma$.
Moreover, with this $\lambda_1$, one can see that 
\begin{equation}
    \frac{d}{dt} F_0[\bgamma] 
    = - \frac{\innergrad{F_0}{F_0} \innergrad{F_1}{F_1} - \innergrad{F_0}{F_1}^2}{\innergrad{F_1}{F_1}}
    \le 0.
    \label{eq:intro-dissipation}
\end{equation}
The same properties hold for general cases, which will be presented later.
The energy structures reflect the physical background of the equation and play important roles in the mathematical analysis.

There are several frameworks to construct numerical methods that preserve the structure of the equation such as the energy structures above.
Such methods are called structure-preserving numerical methods.
It is known that structure-preserving methods are not only physically reasonable but also advantageous for stable numerical computation with large time increments.
Therefore, a lot of techniques have been developed to construct structure-preserving methods for gradient flows such as the Allen--Cahn and the Cahn--Hilliard equations.
We refer the reader to \cite{HaiLW06,FurM11,SXY18} and references therein.
We moreover remark that some of these approaches are available for constrained cases \cite{Oku18,CheS20}.

Structure-preserving methods are also efficient for gradient flows with drastic dissipation speed.
Indeed, for such problems, the time increment should be chosen appropriately, 
and structure-preserving methods allow us to choose the time increment adaptively according to the dissipation speed.
Drastic dissipation may occur when the solution of a gradient flow passes near an unstable critical point, i.e, saddle point of the energy.
In the theory of planar curves, it is known that the elastic energy has some saddle points (cf.~\cite{Sac12,AvvKS13}).
Therefore, it is worth considering structure-preserving numerical methods for gradient flows of curves \eqref{eq:intro-gf}.

However, the mainstream of numerical methods for \eqref{eq:intro-gf} would be the parametric finite element method (cf.~\cite{DKS02,BarGN07,DecD09,BarGN10} and references therein), which is not necessarily structure-preserving.
In these papers, linear semi-implicit $P^1$-finite element schemes are proposed and their theoretical aspects are well-studied.
Although the dissipation property is discussed for the curve shortening flow ($L^2$-gradient flow for the length $L$), 
energy dissipation is not explicitly addressed for other equations such as the Willmore and the Helfrich flows.
Recently, structure-preserving parametric finite element methods for the curve diffusion flow, namely the $H^{-1}$-gradient flow for $L$, are proposed in \cite{JiaL21,BaoZ21}.
For these schemes, both dissipation of the length and conservation of the enclosed area are successfully preserved; however, it is not clear whether it is possible to extend the method to other equations.

There are several approaches other than the parametric finite element method. 
The finite difference method is widely used especially for the curve shortening flow (see e.g., \cite{Kim94,Kim97,SakM21}).
In \cite{SakM21}, structure-preserving finite difference method is proposed for the curve shortening flow with the area-preserving constraint.
Another approach is developed for unconstrained gradient flows in \cite{Kem17}, which proposes a structure-preserving method for general gradient flows \eqref{eq:intro-gf} with $J=0$. 
The method is based on the extended version of discrete gradient method (cf.~\cite{Gon96}) and the Galerkin method using the space of B-spline curves (cf.~\cite{Far99,PieT97,Sch07}).

\subsection{Tangential velocities}

Let us return to the target equation \eqref{eq:intro-gf}.
As shown in the examples \eqref{eq:intro-Willmore} and \eqref{eq:intro-Helfrich}, geometric evolution equations of curves consist of velocities in the normal direction only.
Thus there is no tangential velocity in the target equations.
However, one may add tangential velocities to the equation and replace \eqref{eq:intro-gf} by 
\begin{equation}
    \bgamma_t = - \grad F_0 + \sum_{j=1}^J \lambda_j \grad F_j + W \btau ,
    \label{eq:intro-gf-tangential}
\end{equation}
where $\btau$ is the unit tangential vector of $\bgamma$ and $W$ is an arbitrary function.
This is because the tangential velocity $W$ affects only the parameterization of curves and does not affect the evolution of shape of curves, which is proved in \cite[Proposition 2.4]{EpsG87}.
Tangential velocities neither affect the energy structure, since the gradient $\grad E(\bgamma)$ is usually perpendicular to $\btau$.
Therefore, an appropriate tangential velocity may make it simple to investigate the evolution equation (see, e.g., \cite{Dec97}).

In the context of numerical computation of evolving curves, non-trivial tangential velocities are sometimes added to enrich stability.
For example, in \cite{Kim94,Kim97}, tangential velocity that ensures uniform distribution of vertices is introduced for polygonal approximation of the curve shortening flow,
and more general one is introduced in \cite{MikS04a,MikS04b,MikS06}.
Furthermore, curvature adjusted tangential velocity, that is, tangential velocity that redistributes the vertices according to the curvature of the polygonal curve, is introduced in \cite{SevY11} and applied to image segmentation in \cite{BenKPSTY08}.

In \cite{BarGN10,BarGN11,JiaL21,BaoZ21}, the method of mass-lumping is applied to parametric finite element methods for gradient flows and it is shown that this technique redistributes the vertices of polygonal curves uniformly.
This means that tangential velocity is implicitly involved.
On the other hand, in \cite{SakM21}, tangential velocity is explicitly added to equidistribute the vertices by introducing appropriate tangential vectors.
However, it is not clear whether it is possible to extend this strategy to higher-order equations.

\subsection{Aims of this study}

The first aim of this study is to extend the approach in \cite{Kem17} to general constrained equations \eqref{eq:intro-gf}.
For the energy structures of \eqref{eq:intro-gf}, gradients of functionals play essential roles.
Indeed, from the above observation for $J=1$, both the Lagrange multiplier $\lambda$ and the dissipation speed $\frac{d}{dt} F_0[\bgamma]$ are expressed by the gradients $\grad F_0$ and $\grad F_1$.
Therefore, in order to construct structure-preserving methods for \eqref{eq:intro-gf}, it is required to define discrete version of gradients appropriately.

We will achieve this requirement using the extended version of discrete gradient method as in \cite{Kem17}.
Namely, we define the discrete gradient of a functional $F$ as the vector field $\gradd F(\bgamma,\bargamma)$ that fulfills
\begin{equation}
    F[\bgamma] - F[\bargamma] = \inner{\gradd F(\bgamma,\bargamma)}{\bgamma-\bargamma}{(\bgamma+\bargamma)/2}
\end{equation}
for all curves $\bgamma$ and $\bargamma$.
Precise definition will be given later. 

We here present the idea of our scheme for the case $J=1$ briefly.
The following idea is inspired by \cite{Oku18}.
Let $\gradd F_j(\bgamma,\bargamma)$ be the discrete gradient of $F_j$ for $j=0,1$.
Then, letting $\bgamma^n$ be an approximate solution at the $n$-th step, we temporally discretize the equation by 
\begin{equation}
    \frac{\bgamma^n - \bgamma^{n-1}}{\dt} = -\gradd F_0 + \lambda^n \gradd F_1,
    \quad
    \lambda^n = \frac{\innergradd{F_0}{F_1}}{\innergradd{F_1}{F_1}},
\end{equation}
where $\dt > 0$, $\gradd F_j = \gradd F_j(\bgamma^n, \bgamma^{n-1})$, and $\bgamma^{n-1/2} = (\bgamma^n + \bgamma^{n-1})/2$.
It is easy to see that the solution of this scheme satisfies the desired properties  
\begin{equation}
    \frac{F_0[\bgamma^n] - F_0[\bgamma^{n-1}]}{\dt} \le 0, \quad \frac{F_1[\bgamma^n] - F_1 [\bgamma^{n-1}]}{\dt} = 0.
\end{equation}
For the spatial discretization, we will employ the Galerkin method by the space of B-spline curves.
Therefore, our scheme will be described by weak formulation.

The second aim of this paper is to stabilize the structure-preserving methods by appropriate tangential velocity.
To explain the difficulty, let us consider the unconstrained gradient flow $\bgamma_t = - \grad F$, where $F$ is a given functional.
Then, our temporal discretization is 
\begin{equation}
    \frac{\bgamma^n - \bgamma^{n-1}}{\dt} = - \gradd F(\bgamma^n,\bgamma^{n-1}),
\end{equation}
which has energy dissipation 
\begin{equation}
    \frac{F[\bgamma^n] - F[\bgamma^{n-1}]}{\dt} = - \innergradd{F}{F} \le 0.
    \label{eq:intro-dissipation-disc}
\end{equation}
Now, let $\tilde{\btau}$ be, for example, the unit tangential vector of the curve $\bgamma^{n-1/2}$.
Then, the ``scheme'' with tangential velocity leads to 
\begin{equation}
    \frac{\bgamma^n - \bgamma^{n-1}}{\dt} = - \gradd F(\bgamma^n,\bgamma^{n-1}) + W^n \tilde{\btau},
\end{equation}
where $W^n$ is an arbitrary function, and this equation yields 
\begin{equation}
    \frac{F[\bgamma^n] - F[\bgamma^{n-1}]}{\dt} = - \innergradd{F}{F} + \inner{W^n \tilde{\btau}}{\gradd F}{\bgamma^{n-1/2}}.
\end{equation}
Then, the last term is a troublemaker.  
Indeed, it is not ensured that the discrete gradient $\gradd F$ is perpendicular to $\tilde{\btau}$ and thus the last term remains,
while, in the continuous case, the corresponding term vanishes owing to the orthogonality of $\grad F$ and $\btau$.

To overcome this difficulty, we will introduce a new Lagrange multiplier to ensure the dissipation property.
For the unconstrained case, we propose the scheme
\begin{equation}
    \frac{\bgamma^n - \bgamma^{n-1}}{\dt} = - \gradd F + W^n \tilde{\btau} + \lambda \grad F,
\end{equation}
and we determine $\lambda$ through the equation 
\begin{equation}
    \inner{W^n \tilde{\btau}}{\gradd F}{\bgamma^{n-1/2}} + \lambda \innergradd{F}{F} = 0.
    \label{eq:intro-lambda}
\end{equation}
Then, we can recover the dissipation property \eqref{eq:intro-dissipation-disc}.
As the function $W^n$, we use Deckelnick's tangential velocity \cite{Dec97}.
The same idea will be employed for general constrained problems.

The rest of this paper is organized as follows.
In Section~\ref{sec:preliminaries}, we collect preliminaries on the target equations \eqref{eq:intro-gf}.
Then, we propose the novel schemes in Section~\ref{sec:proposed} and present numerical examples in Section~\ref{sec:numerical}.
The movies of the numerical results are available on YouTube\footnote{\url{https://www.youtube.com/watch?v=X2gpzZT-F1M&list=PLMF3dSqWEii4L9WadrECNtPB4wgraxFJo}}.
Finally, in Section~\ref{sec:conclusion}, we give some concluding remarks.

\section{Preliminaries}
\label{sec:preliminaries}

In this section, we collect some preliminaries that will be referred in the subsequent sections.

\subsection{Structure of constrained gradient flows of planar curves}
\label{subsec:structure}

First we review the structure of our target equations, namely, constrained gradient flows of planar curves.
Let $\bgamma = \bgamma(u,t) \colon [0,1] \times [0,T] \to \bR^2$ be a time-dependent planar closed regular $C^2$-curve.
That is, assume $\bgamma$ is a periodic $C^2$-function with respect to $u$ and satisfies $|\bgamma_u(u,t)| > 0$ for all $u$ and $t$.
Here we use bold faces to describe vector-valued functions and the interval for parametrization of curves is fixed to $[0,1]$.

Let further $F$ be a functional defined for curves.
Typical examples are the area functional, the length functional, and the bending energy, which are, respectively, defined by
\begin{equation}
    A[\bgamma] \coloneqq - \frac{1}{2} \int_\bgamma \bgamma \cdot \bnu ds, \qquad
    L[\bgamma] \coloneqq \int_\bgamma ds, \qquad 
    B[\bgamma] \coloneqq \int_\bgamma k^2 ds ,
    \label{eq:functionals}
\end{equation}
where $s$ is the arc-length parameter of $\bgamma$, $\bnu$ is an inward unit normal vector of $\bgamma$, and $k$ is the (signed) curvature of $\bgamma$.
We denote the (vector-valued) gradient of the functional $F$ by $\grad F$.
That is, if $\bgamma$ is time-dependent, one has 
\begin{equation}
    \frac{d}{dt} F[\bgamma(t)] = \int_\bgamma \grad F \cdot \bgamma_t ds.
\end{equation}
Throughout this paper, we assume that $\grad F$ is parallel to $\bnu$, which is true for functionals in \eqref{eq:functionals}.
Finally, we denote the $L^2$-inner product and $L^2$-norm over $\bgamma$ by $\inner{\cdot}{\cdot}{\bgamma}$ and $\| \cdot \|_\bgamma$, respectively.

\subsubsection{One constraint case}

Let $G$ be another functional and now we consider the evolution equation 
\begin{equation}
    \bgamma_t = - \grad F + \lambda \grad G, \qquad \lambda = \frac{\innergrad{F}{G}}{\innergrad{G}{G}} , 
    \label{eq:one-constrained}
\end{equation}
which is the gradient flow of $F$ with the constraint that $G$ is conserved.
Assume that \eqref{eq:one-constrained} has a smooth solution $\bgamma$. 
Then, it is well-known that 
\begin{equation}
    \frac{d}{dt} F[\bgamma] \le 0, \qquad \frac{d}{dt} G[\bgamma] = 0.
\end{equation}
We here give a proof for later use.
Multiplying \eqref{eq:one-constrained} by $\grad G$ and integrating over $\bgamma$, we have
\begin{equation}
    \frac{d}{dt} G[\bgamma] 
    = - \innergrad{F}{G} + \frac{\innergrad{F}{G}}{\innergrad{G}{G}} \innergrad{G}{G} 
    = 0,
\end{equation}
and thus $G$ is conserved.
Notice that the Lagrange multiplier $\lambda$ is determined to enforce the constraint.
Further, multiplying \eqref{eq:one-constrained} by $\grad F$ and integrating over $\bgamma$, we have
\begin{align}
    \frac{d}{dt} F[\bgamma] 
    &= - \frac{\innergrad{F}{F}\innergrad{G}{G} - \innergrad{F}{G}^2}{\innergrad{G}{G}}, 
    \label{eq:one-dEdt}
\end{align}
and the Cauchy-Schwarz inequality yields that $F$ is dissipative.

\begin{remark}
    Multiplying \eqref{eq:one-constrained} by $\bgamma_t$ and integrating over $\bgamma$, we have
    \begin{equation}
        \inner{\bgamma_t}{\bgamma_t}{\bgamma} = - \frac{d}{dt} F[\bgamma] + \lambda \frac{d}{dt} G[\bgamma].
    \end{equation}
    Since $G$ is conserved, we obtain
    \begin{equation}
        \frac{d}{dt} F[\bgamma] = - \inner{\bgamma_t}{\bgamma_t}{\bgamma} \le 0.
    \end{equation}
    This proof is simpler and valid for multi-constrained case mentioned below.
    However, the previous proof, which states that the dissipation of $F$ is described by gradients only, plays an important role in the discrete settings, especially in the appearance of tangential velocities.
\end{remark}

\subsubsection{Two constraints case}

Let $H$ be another functional and let us next observe the evolution equation 
\begin{equation}
    \bgamma_t = - \grad F + \lambda \grad G + \mu \grad H, 
    \label{eq:two-constrained}
\end{equation}
where Lagrange multipliers $\lambda$ and $\mu$ are determined by the equations
\begin{align}
    - \innergrad{F}{G} + \lambda \innergrad{G}{G} + \mu \innergrad{H}{G} &= 0, \label{eq:two-F} \\
    - \innergrad{F}{H} + \lambda \innergrad{G}{H} + \mu \innergrad{H}{H} &= 0, \label{eq:two-G}
\end{align}
which describe the constraint that $G$ and $H$ are conserved.
For the smooth solution $\bgamma$ of \eqref{eq:two-constrained}, the functionals $F$, $G$, and $H$ satisfy
\begin{equation}
    \frac{d}{dt} F[\bgamma] \le 0, \qquad \frac{d}{dt} G[\bgamma] = 0, \qquad \frac{d}{dt} H[\bgamma] = 0.
\end{equation}
Indeed, multiplying \eqref{eq:two-constrained} by $\grad G$ and $\grad H$, we have
\begin{align}
    \frac{d}{dt} G[\bgamma] 
    &= \inner{\bgamma_t}{\grad G}{\bgamma} 
    = - \innergrad{F}{G} + \lambda \innergrad{G}{G} + \mu \innergrad{H}{G} = 0, \\
    \frac{d}{dt} H[\bgamma] 
    &= \inner{\bgamma_t}{\grad H}{\bgamma} 
    = - \innergrad{F}{H} + \lambda \innergrad{G}{H} + \mu \innergrad{H}{H} = 0, 
\end{align}
by the definition of $\lambda$ and $\mu$.
Let us observe the dissipation of $F$. 
Multiplying\eqref{eq:two-constrained} by $\grad F$, we have 
\begin{equation}
    \frac{d}{dt} F[\bgamma] = - \innergrad{F}{F} + \lambda \innergrad{G}{F} + \mu \innergrad{H}{F}.
    \label{eq:two-dEdt}
\end{equation}
Solving \eqref{eq:two-F} and \eqref{eq:two-G} as a linear system for $(\lambda,\mu)$ and substituting the result into \eqref{eq:two-dEdt}, one obtains
\begin{equation}
    \frac{d}{dt} F[\bgamma] = -
    \begin{vmatrix}
        \innergrad{F}{F} & \innergrad{F}{G} & \innergrad{F}{H} \\
        \innergrad{G}{F} & \innergrad{G}{G} & \innergrad{G}{H} \\
        \innergrad{H}{F} & \innergrad{H}{G} & \innergrad{H}{H} 
    \end{vmatrix}
    \bigg/ 
    \begin{vmatrix}
        \innergrad{G}{G} & \innergrad{G}{H} \\
        \innergrad{H}{G} & \innergrad{H}{H} 
    \end{vmatrix},
    \label{eq:two-dEdt-2}
\end{equation} 
where $|\cdot|$ denotes the determinant of a matrix. 
Since two matrices above are positive semi-definite, we can obtain the dissipation of $F$, provided that $\grad G$ and $\grad H$ are linearly independent in $L^2(\bgamma)$.

\subsubsection{General case}

The above observation can be generalized straightforwardly.
Let $F_j$ ($j=0,1,\dots,J$) be functionals and consider the evolution equation
\begin{equation}
    \bgamma_t = - \grad F_0 + \sum_{j=1}^J \lambda_j \grad F_j
    \label{eq:multi-constrained}
\end{equation}
with Lagrange multipliers $\lambda_j$ determined by
\begin{equation}
    - \innergrad{F_0}{F_i} + \sum_{j=1}^J \lambda_j \innergrad{F_j}{F_i} = 0,
    \qquad i=1,2,\dots,J.
    \label{eq:constraints}
\end{equation}
Then, the following properties hold.
\begin{lemma}\label{lem:multi-structure}
    Assume that \eqref{eq:multi-constrained} has a smooth solution $\bgamma$ and $\grad F_j$ ($j=1,\dots,J$) are linearly independent in $L^2(\bgamma)$.
    Then, we have
    \begin{align}
        \frac{d}{dt} F_0[\bgamma] &= - \frac{D(\grad F_0, \grad F_1, \dots, \grad F_J)}{D(\grad F_1, \dots, \grad F_J)} \le 0, 
        \label{eq:multi-dE0dt}
        \\ 
        \frac{d}{dt} F_i[\bgamma] &= 0, \quad i=1,2,\dots,J,
    \end{align}
    where $D(f_1,\dots,f_m) \coloneqq \det G(f_1,\dots,f_m)$ and $G(f_1,\dots,f_m) \in \bR^{m \times m}$ is the Gram matrix defined by 
    \begin{equation}
        G(f_1,\dots,f_m)_{ij} = \inner{f_i}{f_j}{\bgamma}
    \end{equation}
    for functions $f_j$ on $\bgamma$.
\end{lemma}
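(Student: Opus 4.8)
The plan is to reproduce, in $J$ dimensions, the argument already carried out for the one- and two-constraint cases, replacing the explicit $2\times2$ and $3\times3$ manipulations by their general analogues phrased through Gram determinants. The conservation statements come essentially for free. Multiplying \eqref{eq:multi-constrained} by $\grad F_i$ and integrating over $\bgamma$ gives
\[
    \frac{d}{dt} F_i[\bgamma] = \inner{\bgamma_t}{\grad F_i}{\bgamma} = - \innergrad{F_0}{F_i} + \sum_{j=1}^J \lambda_j \innergrad{F_j}{F_i},
\]
and the right-hand side vanishes identically because it is precisely the $i$-th defining equation \eqref{eq:constraints} for the multipliers. This needs no hypothesis beyond the existence of the $\lambda_j$.

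For the dissipation identity I would first multiply \eqref{eq:multi-constrained} by $\grad F_0$ to obtain the direct generalization of \eqref{eq:two-dEdt},
\[
    \frac{d}{dt} F_0[\bgamma] = - \innergrad{F_0}{F_0} + \sum_{j=1}^J \lambda_j \innergrad{F_j}{F_0},
\]
and then eliminate the $\lambda_j$ using \eqref{eq:constraints}. Writing $M = G(\grad F_1,\dots,\grad F_J)$ for the Gram matrix of the constraint gradients and $b$ for the column vector with entries $b_i = \innergrad{F_0}{F_i}$, the system \eqref{eq:constraints} reads $M \bm{\lambda} = b$ with $\bm{\lambda} = (\lambda_1,\dots,\lambda_J)^\top$. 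Since the $\grad F_j$ are linearly independent, $M$ is positive definite and hence invertible, so $\bm{\lambda} = M^{-1} b$ and $\sum_{j} \lambda_j \innergrad{F_j}{F_0} = b^\top M^{-1} b$. Substituting yields $\frac{d}{dt} F_0[\bgamma] = -\bigl(\innergrad{F_0}{F_0} - b^\top M^{-1} b\bigr)$.

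The final step is a standard Schur-complement computation. The $(J+1)\times(J+1)$ Gram matrix admits the block form
\[
    G(\grad F_0, \grad F_1, \dots, \grad F_J) = \begin{bmatrix} \innergrad{F_0}{F_0} & b^\top \\ b & M \end{bmatrix},
\]
and the identity $\det \begin{bmatrix} a & b^\top \\ b & M \end{bmatrix} = \det(M)\,(a - b^\top M^{-1} b)$ gives exactly
\[
    \innergrad{F_0}{F_0} - b^\top M^{-1} b = \frac{D(\grad F_0, \grad F_1, \dots, \grad F_J)}{D(\grad F_1, \dots, \grad F_J)},
\]
which is the claimed formula. The sign follows because a Gram determinant is always nonnegative (it is the squared volume of the spanned parallelepiped) while the denominator is strictly positive by linear independence; equivalently, the quantity equals $\|\grad F_0 - P\grad F_0\|_\bgamma^2 \ge 0$, where $P$ is the $L^2(\bgamma)$-orthogonal projection onto the span of $\grad F_1,\dots,\grad F_J$.

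I expect the only genuine subtlety to be organizational rather than analytic: one must confirm that the denominator $D(\grad F_1,\dots,\grad F_J)$ is nonzero, which the linear-independence hypothesis supplies, and invoke the Schur-complement (equivalently Cramer's-rule) identity cleanly. No new estimate on the evolution equation is required, since once the two inner-product identities above are established everything collapses to finite-dimensional linear algebra.
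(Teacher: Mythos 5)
Your proposal is correct, and its skeleton coincides with the paper's: conservation is obtained by testing \eqref{eq:multi-constrained} with $\grad F_i$ and invoking the defining system \eqref{eq:constraints}, and dissipation by testing with $\grad F_0$ and then eliminating the multipliers through that same linear system. Where you genuinely differ is in the linear algebra that identifies the eliminated expression with a ratio of Gram determinants. The paper isolates this step as an abstract Hilbert-space statement (\cref{lem:multi-lemma}) and proves it by writing each $\lambda_j$ through Cramer's rule, i.e.\ via the cofactors $\Delta_{ij}$ of the Gram matrix of $\grad F_1,\dots,\grad F_J$, and then recognizing the resulting double sum as the cofactor expansion of the $(J+1)\times(J+1)$ determinant $D(\grad F_0,\dots,\grad F_J)$ --- a recognition the paper asserts rather than details. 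You instead invoke the block-determinant (Schur-complement) identity
\begin{equation}
    \det \begin{bmatrix} a & b^\top \\ b & M \end{bmatrix} = \det(M)\,\paren*{a - b^\top M^{-1} b},
\end{equation}
which packages the same computation as a standard named fact; moreover, your observation that $a - b^\top M^{-1} b = \| \grad F_0 - P \grad F_0 \|_\bgamma^2$, with $P$ the $L^2(\bgamma)$-orthogonal projection onto $\operatorname{span}\{\grad F_1,\dots,\grad F_J\}$, makes the sign in \eqref{eq:multi-dE0dt} transparent, whereas the paper only gestures at positive semi-definiteness in its two-constraint discussion. One thing the paper's formulation buys that yours should preserve: \cref{lem:multi-lemma} is deliberately stated for an arbitrary real Hilbert space so that it can be reused verbatim in the discrete setting (\cref{lem:vanila} and \cref{rem:multiplier}, where the inner product is taken over $\bgamma^{n-1/2}$ and the gradients are discrete gradients). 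Your argument is equally abstract in substance --- nothing in it uses anything beyond an inner product --- so it serves that purpose just as well, but it is worth stating it at that level of generality rather than only for $L^2(\bgamma)$.
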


To see \eqref{eq:multi-dE0dt}, we show the following statement.
\begin{lemma}\label{lem:multi-lemma}
    Let $H$ be a real Hilbert space and let $f_j \in H\, (j=0,\dots,m, \, m \ge 1)$. 
    Assume that the set $(f_j)_{j=1}^m$ is linearly independent and $(f_j)_{j=0}^m$ satisfies 
    \begin{equation}
        -\inner{f_0}{f_i}{H} + \sum_{j=1}^m \lambda_j \inner{f_j}{f_i}{H} = 0, \qquad i = 1,\dots,m
        \label{eq:induction-lambdaj}
    \end{equation}
    for some $\lambda_j \in \bR$. Then, 
    \begin{equation}
        -\inner{f_0}{f_0}{H} + \sum_{j=1}^m \lambda_j \inner{f_j}{f_0}{H} = 
        - \frac{D(f_0,\dots,f_m)}{D(f_1,\dots,f_m)},
        \label{eq:induction-dE0dt}
    \end{equation}
    where $D(g_1,\dots,g_k) = \det G(g_1,\dots,g_k)$ and $G(g_1,\dots,g_k)_{ij} = \inner{g_i}{g_j}{H}$ for $g_i \in H \, (i=1,\dots,k, \, k \ge 1)$.
\end{lemma}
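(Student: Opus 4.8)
The plan is to recognize that the hypothesis \eqref{eq:induction-lambdaj} is exactly the normal-equation system characterizing the orthogonal projection of $f_0$ onto $V \coloneqq \operatorname{span}(f_1,\dots,f_m)$, and then to identify the left-hand side of \eqref{eq:induction-dE0dt} with a Schur complement of the bordered Gram matrix $G(f_0,f_1,\dots,f_m)$. In other words, the whole statement reduces to the classical formula expressing the squared distance from a vector to a subspace as a ratio of Gram determinants.

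First I would set $A \coloneqq G(f_1,\dots,f_m) \in \bR^{m\times m}$, collect the coefficients into the column vector $\lambda = (\lambda_1,\dots,\lambda_m)^\top$, and write $b = (\inner{f_0}{f_1}{H},\dots,\inner{f_0}{f_m}{H})^\top$. Then \eqref{eq:induction-lambdaj} reads $A\lambda = b$. Since $(f_j)_{j=1}^m$ is linearly independent, $A$ is symmetric positive definite; in particular $D(f_1,\dots,f_m) = \det A > 0$, so $\lambda = A^{-1}b$ is well defined and the denominator in \eqref{eq:induction-dE0dt} is nonzero. Substituting, and using symmetry of the real inner product so that $\sum_{j}\lambda_j \inner{f_j}{f_0}{H} = b^\top\lambda$, the left-hand side of \eqref{eq:induction-dE0dt} becomes $-\inner{f_0}{f_0}{H} + b^\top\lambda = -\bigl(\inner{f_0}{f_0}{H} - b^\top A^{-1} b\bigr)$.

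Next I would invoke the block-determinant (Schur complement) identity. Ordering the $(m+1)\times(m+1)$ Gram matrix with $f_0$ first gives
\begin{equation}
    G(f_0,f_1,\dots,f_m) =
    \begin{bmatrix}
        \inner{f_0}{f_0}{H} & b^\top \\
        b & A
    \end{bmatrix},
\end{equation}
whence $D(f_0,\dots,f_m) = \det A \cdot \bigl(\inner{f_0}{f_0}{H} - b^\top A^{-1} b\bigr) = D(f_1,\dots,f_m)\bigl(\inner{f_0}{f_0}{H} - b^\top A^{-1}b\bigr)$. Dividing by $D(f_1,\dots,f_m)$ and combining with the previous display yields \eqref{eq:induction-dE0dt} exactly.

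I do not expect a genuine obstacle: the computation is routine once the two ingredients are in place, namely the normal-equation reading of the hypothesis and the Schur determinant formula. The only points requiring a word of care are that linear independence makes $A$ invertible, so that both $\lambda = A^{-1}b$ and the ratio of determinants are meaningful, and that the row/column ordering in the bordered Gram matrix is kept consistent. Geometrically the statement is transparent, since the left-hand side equals $-\lVert f_0 - P f_0 \rVert_H^2 = -\operatorname{dist}(f_0,V)^2$ with $P$ the orthogonal projection onto $V$; one could alternatively present the argument in this projection language, or by induction on $m$ as the equation labels suggest, but the Schur complement route is the most self-contained.
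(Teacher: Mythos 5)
Your proposal is correct and is essentially the paper's own argument: the paper likewise solves the normal equations \eqref{eq:induction-lambdaj} for $\lambda$ (via Cramer's rule, writing $\lambda_j = \frac{1}{D(f_1,\dots,f_m)}\sum_{i=1}^m \Delta_{ij}\inner{f_0}{f_i}{H}$ with cofactors $\Delta_{ij}$ of $G(f_1,\dots,f_m)$), substitutes into the left-hand side, and then recognizes the resulting expression as $-D(f_0,\dots,f_m)/D(f_1,\dots,f_m)$. Your Schur-complement identity $D(f_0,\dots,f_m) = \det A\,\bigl(\inner{f_0}{f_0}{H} - b^\top A^{-1}b\bigr)$ is exactly the paper's final ``bracket'' identification written in block form rather than entry-wise, so the two proofs differ only in packaging (and you add the useful remarks that positive definiteness of $A$ justifies the division and that the quantity is $-\operatorname{dist}(f_0,V)^2$).
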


\begin{proof}
    Let $\Delta_{ij}$ be the $(i,j)$-cofactor of the matrix $G(f_1,\dots,f_m)$.
    Since the linear system \eqref{eq:induction-lambdaj} is solvable by the assumption, we can express $\lambda_i$ explicitly by
    \begin{equation}
        \lambda_j = \frac{1}{D(f_1,\dots,f_m)} \sum_{i=1}^m \Delta_{ij} \inner{f_0}{f_i}{H}.
    \end{equation}
    Therefore, we have
    \begin{multline}
        -\inner{f_0}{f_0}{H} + \sum_{j=1}^m \lambda_j \inner{f_j}{f_0}{H} \\
        = - \frac{1}{D(f_1,\dots,f_m)} \bracket*{ \inner{f_0}{f_0}{H}D(f_1,\dots,f_m) - \sum_{j=1}^m \inner{f_j}{f_0}{H} \sum_{i=1}^m \Delta_{ij} \inner{f_0}{f_i}{H} }.
    \end{multline}
    One can see that the factors in the bracket is nothing but $D(f_0, \dots, f_m)$, 
    and thus we complete the proof of \eqref{eq:induction-dE0dt}
\end{proof}

\begin{proof}[Proof of \cref{lem:multi-structure}]
    The conservation of $F_i$ ($i \ge 1$) is clear. Indeed, by the definition of $\lambda_j$, we have
    \begin{equation}
        \frac{d}{dt} F_i[\bgamma] 
        = \inner{\bgamma_t}{\grad F_i}{\bgamma} 
        = - \innergrad{F_0}{F_i} + \sum_{j=1}^J \lambda_j \innergrad{F_j}{F_i} = 0.
    \end{equation}
    The dissipation identity \eqref{eq:multi-dE0dt} is a direct consequence of \cref{lem:multi-lemma}.
\end{proof}

\subsection{Tangential velocity}
\label{subsec:tangential}

In general, an evolution equation of a curve is described by 
\begin{equation}
    \bgamma_t = V \bnu + W \tau,
\end{equation}
where $V$ and $W$ are normal and tangential velocities, respectively.
In \cite[Proposition 2.4]{EpsG87}, it is proved that the tangential velocity affects only the parameterization of curves and does not affect the evolution of shape of curves.
Therefore, an appropriate tangential velocity may make it simple to investigate the evolution equation.
For example, in \cite{Dec97}, the tangential velocity
\begin{equation}
    W = - \frac{\partial}{\partial u} \paren*{\frac{1}{|\bgamma_u|}}
    \label{eq:Deckelnick}
\end{equation}
is proposed to investigate the curve shortening flow. 
With this $W$, the curve shortening flow becomes a parabolic equation 
\begin{equation}
    \bgamma_t = \frac{1}{|\bgamma_u|^2} \bgamma_{uu}.
\end{equation} 
Tangential velocity is also used in the theory of numerical computation of evolving curves.
Indeed, non-trivial tangential velocities are sometimes added to control the distribution of vertices of polygonal approximations \cite{Kim94,Kim97,MikS04a,MikS04b,MikS06,SevY11,SakM21}.

We further remark that tangential velocities do not affect the geometric structure.
Let us consider the evolution equation \eqref{eq:multi-constrained} with a tangential velocity
\begin{equation}
    \bgamma_t = - \grad F_0 + \sum_{j=1}^J \lambda_j \grad F_j + W\btau
\end{equation}
and constraints \eqref{eq:constraints}.
Then, multiplying this by $\grad F_0$, we have 
\begin{equation}
    \frac{d}{dt} F_0[\bgamma] = - \innergrad{F_0}{F_0} + \sum_{j=1}^J \lambda_j \innergrad{F_j}{F_0}
\end{equation}
since $\inner{\grad F_0}{\btau}{\bgamma} = 0$.
The conservation of $F_i$ ($i \ge 1$) is also unaffected.

\section{Proposed schemes}
\label{sec:proposed}

In this section, we present our numerical schemes for constrained gradient flow \eqref{eq:multi-constrained} and \eqref{eq:constraints}.
We first consider discretization of curves.
In this study, we use B-spline curves to discretize curves, rather than polygonal approximation.
For $N,p \in \bN$, let $h = 1/N$ and 
\begin{equation}
    V_h = V_{h,p} = \operatorname{span} \{ B^p_i \}_{i=1}^N
\end{equation}
be the space of periodic B-spline functions with the know vector $\{ ih \}_{i=1}^N$, where $B^p_i$ is the $i$-th periodic B-spline basis function of degree $p$.
We also let $\bV_h = \bV_{h,p} = V_{h,p} \times V_{h,p}$ be the space of closed B-spline curves.
Therefore, a B-spline curve $\bgamma \in \bV_h$ is expressed by 
\begin{equation}
    \bgamma = \sum_{i=1}^N \bP_i B^p_i
    \label{eq:B-spline}
\end{equation}
for some $\bP_i \in \bR^2$, which is called a control point.
We discretize curves by the Galerkin approximation with the space $\bV_h$.
For precise definition, see \cite{Kem17}, and we refer the reader to \cite{Far99,PieT97,Sch07} for more details on the properties of B-spline functions.

We then consider temporal discretization.
Our method is based on the extended version of the discrete partial derivative method proposed in \cite{Kem17}, 
which can be also regarded as an extended version of the discrete gradient method.
In order to illustrate our scheme, we introduce the discrete gradient.
\begin{definition}
    Let $E$ be a functional defined over $\bV_h = \bV_{h,p}$.
    Then, discrete gradient of $E$ is a vector field $\gradd E = \gradd E(\bgamma_1,\bgamma_2) \in \bV_h$ that satisfies the following two properties:
    \begin{enumerate}[label=(\roman*)]
        \item For all $\bgamma_1,\bgamma_2 \in \bV_h$, 
        \begin{equation}
            E[\bgamma_1] - E[\bgamma_2] = \int_{(\bgamma_1+\bgamma_2)/2} \gradd E(\bgamma_1,\bgamma_2) \cdot (\bgamma_1 - \bgamma_2) ds.
            \label{eq:disc-grad}
        \end{equation}
        \item For all $\bgamma \in \bV_h$, 
        \begin{equation}
            \int_\bgamma \grad E(\bgamma) \cdot \bv ds = \int_\bgamma \gradd E(\bgamma,\bgamma) \cdot \bv ds, \qquad \forall \bv \in \bV_h.
            \label{eq:disc-grad-2}
        \end{equation}
    \end{enumerate}
\end{definition} 
We note that the definition of the discrete gradient is not unique and it requires that $\gradd E$ belongs to $\bV_h$.
The discrete gradients for the functionals mentioned in \eqref{eq:functionals} are given in \cref{app:gradients}.

We now propose a structure-preserving scheme for \eqref{eq:multi-constrained} and \eqref{eq:constraints} without tangential velocity.
The idea to define the Lagrange multipliers is inspired by \cite{Oku18}.
\begin{scheme}\label{scheme:vanila}
    For given $\bgamma^{n-1} \in \bV_h$, find $\bgamma^n \in \bV_h$ that satisfies
    \begin{equation}
        \begin{dcases}
            \innern*{\frac{\bgamma^n - \bgamma^{n-1}}{\dt_n}}{\bv}
            = - \innern{\gradd F_0}{\bv} + \sum_{j=1}^J \lambda_j \innern{\gradd F_j}{\bv}, 
            & \forall \bv \in \bV_h, \\
            - \innergradd{F_0}{F_i} + \sum_{j=1}^J \lambda_j \innergradd{F_j}{F_i} = 0, & i=1,2,\dots,J,
        \end{dcases}
        \label{eq:proposed-vanila}
    \end{equation}
    where $\dt_n > 0$, $\bgamma^{n-1/2} \coloneqq (\bgamma^n + \bgamma^{n-1})/2$, and $\gradd F_j \coloneqq \gradd F_j(\bgamma^n,\bgamma^{n-1})$.
\end{scheme}

\begin{remark}
    When there is no constraint, \cref{scheme:vanila} coincides with the scheme given in \cite{Kem17}.
\end{remark}

\cref{scheme:vanila} has the following properties, which corresponds to \cref{lem:multi-structure}. 
Substituting $\bv = \gradd F_j$, one can prove the following lemma parallel to the proof of \cref{lem:multi-structure}, which is based on \cref{lem:multi-lemma}.
Thus we omit the proof.
\begin{lemma}\label{lem:vanila}
    Assume that \cref{scheme:vanila} has a unique solution $\bgamma^n \in \bV_h$ and the vector fields $\gradd F_1, \dots, \gradd F_J$ are linearly independent. 
    Then, we have
    \begin{align}
        \frac{F_0^n - F_0^{n-1}}{\dt_n} &= - \frac{D_\rd(\gradd F_0, \gradd F_1, \dots, \gradd F_J)}{D_\rd(\gradd F_1, \dots, \gradd F_J)} \le 0, 
        \\ 
        \frac{F_i^n - F_i^{n-1}}{\dt_n} &= 0, \quad i=1,2,\dots,J,
    \end{align}
    where $F_j^n \coloneqq F_j[\bgamma^n]$ and
    \begin{equation}
        D_\rd(\gradd F_0, \gradd F_1, \dots, \gradd F_J) = \det G, \quad G_{ij} = \innergradd{F_i}{F_j}.
    \end{equation}
    The denominator $D_\rd(\gradd F_1, \dots, \gradd F_J)$ is defined by the same fashion and equal to $1$ if $J=0$.
\end{lemma}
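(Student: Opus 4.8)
The plan is to mirror the proof of \cref{lem:multi-structure} at the discrete level, replacing the continuous identity $\frac{d}{dt}F_j[\bgamma] = \inner{\bgamma_t}{\grad F_j}{\bgamma}$ by the defining property (i) of the discrete gradient, and then invoking \cref{lem:multi-lemma} verbatim. The link that makes this transcription work is that the discrete gradient $\gradd F_j$ is required to belong to $\bV_h$, so it is an admissible test function in the first equation of \cref{scheme:vanila}; this plays the role of ``multiplying the equation by $\grad F_j$'' in the continuous argument.

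First I would record the basic identity. Applying property (i) to the pair $(\bgamma^n,\bgamma^{n-1})$, whose midpoint is exactly $\bgamma^{n-1/2}$, gives for each $j$
\[
    F_j^n - F_j^{n-1} = \innern{\gradd F_j}{\bgamma^n - \bgamma^{n-1}},
\]
since the integration in (i) is taken over $\bgamma^{n-1/2}$, which is precisely the curve defining $\innern{\cdot}{\cdot}$. Dividing by $\dt_n$, the left-hand side is the quantity I want to control, and the right-hand side is $\innern{\gradd F_j}{(\bgamma^n - \bgamma^{n-1})/\dt_n}$.

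For the conservation statement I would set $\bv = \gradd F_i$ ($i \ge 1$) in the first equation of \eqref{eq:proposed-vanila}. The left-hand side becomes $(F_i^n - F_i^{n-1})/\dt_n$ by the identity above, while the right-hand side is $-\innergradd{F_0}{F_i} + \sum_{j=1}^J \lambda_j \innergradd{F_j}{F_i}$, which vanishes by the discrete constraint equations (the second line of \eqref{eq:proposed-vanila}). Hence $(F_i^n - F_i^{n-1})/\dt_n = 0$. For the dissipation statement I would instead set $\bv = \gradd F_0$, obtaining
\[
    \frac{F_0^n - F_0^{n-1}}{\dt_n} = -\innergradd{F_0}{F_0} + \sum_{j=1}^J \lambda_j \innergradd{F_j}{F_0}.
\]
I would then apply \cref{lem:multi-lemma} with $H = L^2(\bgamma^{n-1/2})$, $m = J$, $f_0 = \gradd F_0$, and $f_j = \gradd F_j$ for $j=1,\dots,J$: the hypothesis that $(f_j)_{j=1}^m$ is linearly independent is the standing assumption of the present lemma, and the system \eqref{eq:induction-lambdaj} coincides term for term with the discrete constraint equations. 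The conclusion \eqref{eq:induction-dE0dt} then produces exactly the ratio $-D_\rd(\gradd F_0,\dots,\gradd F_J)/D_\rd(\gradd F_1,\dots,\gradd F_J)$, because $\innergradd{F_i}{F_j}$ is the $(i,j)$-entry of the Gram matrix in $L^2(\bgamma^{n-1/2})$. The sign $\le 0$ follows since every Gram determinant is nonnegative while the denominator is strictly positive under the linear independence assumption.

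I do not expect a genuine obstacle, since the argument is a term-by-term translation of the continuous case and the real work is done by \cref{lem:multi-lemma}. The only points that need care are bookkeeping: I must confirm that the inner product in \cref{scheme:vanila} and the one used in property (i) are the same $L^2(\bgamma^{n-1/2})$-product (they are, because the midpoint curve in (i) is $\bgamma^{n-1/2}$), and that the membership $\gradd F_j \in \bV_h$ genuinely licenses the substitutions $\bv = \gradd F_i$. Once these identifications are in place the proof closes immediately, which is why the authors omit the details.
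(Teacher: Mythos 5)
Your proposal is correct and is precisely the argument the paper has in mind: the paper omits the proof, saying only that one substitutes $\bv = \gradd F_j$ into \eqref{eq:proposed-vanila} and proceeds in parallel to \cref{lem:multi-structure} via \cref{lem:multi-lemma}, which is exactly your combination of property (i) of the discrete gradient with the test functions $\bv = \gradd F_i$ and the Gram-determinant identity applied in $H = L^2(\bgamma^{n-1/2})$. The bookkeeping points you flag (the midpoint curve in (i) coinciding with $\bgamma^{n-1/2}$, and $\gradd F_j \in \bV_h$ being an admissible test function) are indeed the only things to check, and you check them correctly.
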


We add a tangential velocity to \cref{scheme:vanila}.
Let $\dtau = \btau(\bgamma^{n-1/2})$ be the unit tangential vector of $\bgamma^{n-1/2}$ and $W^n$ be a given tangential velocity. 
If we add $W^n\dtau$ to \cref{scheme:vanila} naively as 
\begin{equation}
    \innern*{\frac{\bgamma^n - \bgamma^{n-1}}{\dt_n}}{\bv}
    = - \innern{\gradd F_0}{\bv} + \sum_{j=1}^J \lambda_j \innern{\gradd F_j}{\bv} + \innern{W^n\dtau}{\bv}, 
\end{equation}
then we cannot ensure the dissipation of $F_0$. 
Indeed, letting $\bv = \gradd F_0$, we have
\begin{equation}
    \frac{F_0^n - F_0^{n-1}}{\dt_n} = - \frac{D(\gradd F_0, \gradd F_1, \dots, \gradd F_J)}{D(\gradd F_1, \dots, \gradd F_J)} + \innern{W^n\dtau}{\gradd F_0}. 
\end{equation}
If we have $\innern{W^n\dtau}{\gradd F_0} = 0$ as in the continuous case, then the above system has the dissipation of $F_0$.
However, this is not the case in contrast to the continuous case. 
Indeed, the discrete gradient $\gradd F_0$, which is defined via the relation \eqref{eq:disc-grad}, is not perpendicular to $\dtau$ in general.

In order to achieve the dissipation of $F_0$ with tangential velocities, we introduce a new Lagrange multiplier $\lambda_0$ and propose the following scheme.
\begin{scheme}\label{scheme:tangential}
    For given $\bgamma^{n-1} \in \bV_h$, find $\bgamma^n \in \bV_h$ that satisfies
    
    \begin{gather}
        \begin{multlined}[b]
            \innern*{\frac{\bgamma^n - \bgamma^{n-1}}{\dt_n}}{\bv}
            = - \innern{\gradd F_0}{\bv} + \sum_{j=0}^J \lambda_j \innern{\gradd F_j}{\bv}  \\
            + \innern{W^n\dtau}{\bv}, 
            \quad \forall \bv \in \bV_h, 
            \label{eq:proposed-gf}
        \end{multlined}
        \\
        \begin{multlined}[b]
            - \innergradd{F_0}{F_0} + \sum_{j=0}^J \lambda_j \innergradd{F_j}{F_0} + \innern{W^n\dtau}{\gradd F_0} \\
                = - \frac{D_\rd(\gradd F_0, \gradd F_1, \dots, \gradd F_J)}{D_\rd(\gradd F_1, \dots, \gradd F_J)},
                \label{eq:proposed-dissipation}
        \end{multlined}
    \shortintertext{and} 
        \begin{multlined}[b]
            - \innergradd{F_0}{F_i} + \sum_{j=0}^J \lambda_j \innergradd{F_j}{F_i} + \innern{W^n\dtau}{\gradd F_i} = 0, 
            \\ i=1,2,\dots,J,
            \label{eq:proposed-constraints}
        \end{multlined}
    \end{gather}
    where $\dt_n > 0$ and the right-hand-side of \eqref{eq:proposed-dissipation} is defined as in \cref{lem:vanila}.
\end{scheme}

\begin{remark}\label{rem:multiplier}
    Letting $\dt_n \to 0$ and $h \to 0$ formally, one can see that $\lambda_0 \to 0$.
    Indeed, when $\dt_n \to 0$ and $h \to 0$, equations \eqref{eq:proposed-dissipation} and \eqref{eq:proposed-constraints} become formally 
    \begin{align}
        - \innergrad{F_0}{F_0} + \sum_{j=0}^J \lambda_j \innergrad{F_j}{F_0} 
            &= - \frac{D(\grad F_0, \grad F_1, \dots, \grad F_J)}{D(\grad F_1, \dots, \grad F_J)}, \label{eq:remark-dissipation}\\
        - \innergrad{F_0}{F_i} + \sum_{j=0}^J \lambda_j \innergrad{F_j}{F_i} &= 0, \qquad i=1,2,\dots,J. \label{eq:remark-constraints}
    \end{align}
    Now let $\tilde{\lambda}_j$ ($j=1,\dots,J$) be the solution of the linear system \eqref{eq:constraints}.
    Then, owing to \cref{lem:multi-lemma}, one can see that $ (\lambda_0,\lambda_1,\cdots,\lambda_J) = (0,\tilde{\lambda}_1,\dots,\tilde{\lambda}_J)$ satisfies \eqref{eq:remark-dissipation} and \eqref{eq:remark-constraints}.
    This means that $\lambda_0 \to 0$ formally when $\dt_n \to 0$ and $h \to 0$,
    which will be verified in numerical examples later (see \cref{ex:Willmore-lambda}). 
\end{remark}

\begin{remark}
    The role of the Lagrange multiplier $\lambda_0$ can be interpreted from the geometric viewpoint.
    To see this, let us consider the unconstrained equation $\bgamma_t = -\grad F_0$.
    Then, the naive scheme \eqref{eq:proposed-vanila} becomes 
    \begin{equation}
        \frac{\bgamma^n - \bgamma^{n-1}}{\dt_n} = - \gradd F_0
    \end{equation}
    as an equation in $\bV_h$. 
    Then, the discrete gradient $\gradd F_0$ is not perpendicular to $\dtau$ (and thus we introduce a multiplier $\lambda_0$).
    However, it is expected that they are almost perpendicular.
    Therefore, one can consider a ``modified'' tangential velocity $W^n \dtau + \lambda_0 \gradd F_0$ and determine $\lambda_0$ so that it satisfies the orthogonality in the sense of 
    \begin{equation}
        \innern{W^n \dtau + \gradd F_0}{\gradd F_0} = 0,
    \end{equation}
    which coincides with \eqref{eq:proposed-dissipation} for $J=0$.
    At this stage, $\lambda_0$ is regarded as the magnitude of modification, which is expected to be small.
\end{remark}

\cref{scheme:tangential} has the desired properties.
\begin{lemma}
    \label{lem:structure-preserving}
    Assume that \cref{scheme:tangential} has a unique solution $\bgamma^n \in \bV_h$ and the vector fields $\gradd F_1, \dots, \gradd F_J$ are linearly independent. 
    Then, we have
    \begin{align}
        \frac{F_0^n - F_0^{n-1}}{\dt_n} &= - \frac{D(\gradd F_0, \gradd F_1, \dots, \gradd F_J)}{D(\gradd F_1, \dots, \gradd F_J)} \le 0, 
        \\ 
        \frac{F_i^n - F_i^{n-1}}{\dt_n} &= 0, \quad i=1,2,\dots,J,
    \end{align}
    where $F_j^n \coloneqq F_j[\bgamma^n]$.
\end{lemma}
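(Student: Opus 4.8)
The plan is to test the weak evolution equation \eqref{eq:proposed-gf} against the discrete gradients $\gradd F_i$ themselves and then read off the energy behaviour directly from the algebraic identities \eqref{eq:proposed-dissipation} and \eqref{eq:proposed-constraints} that fix the Lagrange multipliers. The argument runs parallel to the proof of \cref{lem:vanila}; the only genuinely new ingredient is the tangential term $\innern{W^n\dtau}{\gradd F_i}$, which has been absorbed into the defining equations of the multipliers precisely by design.

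First I would invoke the defining property \eqref{eq:disc-grad} of the discrete gradient. Because the inner product $\innern{\cdot}{\cdot}$ is taken over the midpoint curve $\bgamma^{n-1/2} = (\bgamma^n+\bgamma^{n-1})/2$, which is exactly the curve over which \eqref{eq:disc-grad} integrates, applying that identity with $\bgamma_1 = \bgamma^n$ and $\bgamma_2 = \bgamma^{n-1}$ gives, for each $i = 0,1,\dots,J$,
\begin{equation*}
    F_i^n - F_i^{n-1} = \innern{\gradd F_i}{\bgamma^n - \bgamma^{n-1}}.
\end{equation*}
Since $\gradd F_i \in \bV_h$ by definition, it is an admissible test function, so I would substitute $\bv = \gradd F_i$ into \eqref{eq:proposed-gf}; using the symmetry of the inner product the left-hand side becomes $(F_i^n - F_i^{n-1})/\dt_n$, whence
\begin{equation*}
    \frac{F_i^n - F_i^{n-1}}{\dt_n} = - \innergradd{F_0}{F_i} + \sum_{j=0}^J \lambda_j \innergradd{F_j}{F_i} + \innern{W^n\dtau}{\gradd F_i}.
\end{equation*}

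For $i \ge 1$ the right-hand side is exactly the left-hand side of the constraint equation \eqref{eq:proposed-constraints}, which vanishes, giving the conservation $F_i^n = F_i^{n-1}$. For $i = 0$ the right-hand side is exactly the left-hand side of \eqref{eq:proposed-dissipation}, yielding the dissipation identity with right-hand side $-D_\rd(\gradd F_0,\dots,\gradd F_J)/D_\rd(\gradd F_1,\dots,\gradd F_J)$. It then remains to check the sign: the numerator is the Gram determinant of $\gradd F_0,\dots,\gradd F_J$ in $L^2(\bgamma^{n-1/2})$ and is hence non-negative, while the denominator is the Gram determinant of $\gradd F_1,\dots,\gradd F_J$, which is strictly positive under the assumed linear independence; thus the ratio is $\ge 0$ and the dissipation inequality follows.

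I expect no real obstacle here, since \cref{scheme:tangential} was engineered so that testing against $\gradd F_i$ reproduces \eqref{eq:proposed-dissipation} and \eqref{eq:proposed-constraints} term by term. The only points needing care, rather than difficulty, are that the midpoint curve in \eqref{eq:disc-grad} coincides with the base curve $\bgamma^{n-1/2}$ of $\innern{\cdot}{\cdot}$, so that no correction term survives, and that $\gradd F_i$ genuinely lies in $\bV_h$, which legitimises its use as a test function. The work in this scheme lies in its construction; the verification itself is routine.
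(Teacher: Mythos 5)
Your proposal is correct and follows essentially the same route as the paper's own proof: test \eqref{eq:proposed-gf} with $\bv = \gradd F_i$, use the defining property \eqref{eq:disc-grad} of the discrete gradient (whose midpoint curve coincides with $\bgamma^{n-1/2}$) to identify the left-hand side with $(F_i^n - F_i^{n-1})/\dt_n$, and then conclude by the multiplier equations \eqref{eq:proposed-dissipation} and \eqref{eq:proposed-constraints}. The only difference is that you spell out the sign check via Gram determinants, which the paper leaves implicit by referring back to \cref{lem:vanila} and \cref{lem:multi-lemma}.
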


\begin{proof}
    By the definition of the discrete gradients and \eqref{eq:proposed-gf}, we have
    \begin{equation}
        \frac{F_i^n - F_i^{n-1}}{\dt_n} 
        = - \innergradd{F_0}{F_i} + \sum_{j=0}^J \lambda_j \innergradd{F_j}{F_i} + \innern{W^n\dtau}{\gradd F_i}
    \end{equation}
    for all $i = 0,1,\dots, J$.
    Hence we obtain the desired assertion by \eqref{eq:proposed-dissipation} and \eqref{eq:proposed-constraints}.
\end{proof}

\section{Numerical examples}
\label{sec:numerical}

In this section, we present some numerical results.
Throughout this section, we choose Deckelnick's tangential velocity \eqref{eq:Deckelnick} with magnitude $\alpha_0 > 0$ for $W$ in \cref{scheme:tangential}, namely,
\begin{equation}
    W = - \alpha_0 \frac{\partial}{\partial u} \paren*{\frac{1}{|\bgamma_u|}}.
\end{equation}
The nonlinear equation for each step is solved by the Newton method.
For the practical computation, we regard the tangential velocity $W$ as an unknown function in $V_h$.
That is, we add the equation
\begin{equation}
    \int_0^1 W v du = \alpha_0 \int_0^1 \frac{v_u}{|\bgamma_u|} du, 
    \qquad \forall v \in V_h
\end{equation}
to our scheme.
We further regard the discrete gradients and Lagrange multipliers as unknown vectors in $\bV_h$ and unknown real numbers, respectively.
Precise schemes are presented for individual examples.

Throughout this section, we use the following notations. 
Let $\bgamma^{n-1}, \bgamma^n \in \bV_h$ be the approximate solution at the $(n-1)$- and $n$-th steps, respectively.
Then, we set $\bgamma^{n-1/2} = (\bgamma^n + \bgamma^{n-1})/2$ and 
\begin{align}
    g^n &\coloneqq |\bgamma^n_u|, \qquad 
    D^n \coloneqq \det(\bgamma^n_u, \bgamma^n_{uu}), \qquad 
    \bT \coloneqq \frac{\bgamma^n_u + \bgamma^{n-1}_u}{g^n + g^{n-1}}, \\ 
    \mathbf{G} &\coloneqq \frac{(D^n)^2 + (D^{n-1})^2}{2} (g^n)^{-5} (g^{n-1})^{-5}  \sum_{l=0}^4 (g^n)^l (g^{n-1})^{4-l} \bT, \\
    H &\coloneqq \frac{(g^n)^{-5} + (g^{n-1})^{-5}}{2} \paren*{D^n + D^{n-1}},
    \qquad J \coloneqq \begin{bmatrix}
        0 & -1 \\ 1 & 0
    \end{bmatrix}. 
\end{align}

We made the movies of results of the following examples except for \cref{ex:Willmore-lambda}.
The movies are available on YouTube\footnote{\url{https://www.youtube.com/watch?v=X2gpzZT-F1M&list=PLMF3dSqWEii4L9WadrECNtPB4wgraxFJo}}.

\subsection{Unconstrained case: Willmore flow}

To see the effect of tangential velocities, we first consider an unconstrained case.
In this section, we address the Willmore flow, which is the $L^2$-gradient flow of the elastic energy defined by 
\begin{equation}
    E[\bgamma] = B[\bgamma] + k_0 L[\bgamma],
\end{equation}
where $k_0 \ge 0$ is a given constant.
That is, the Willmore flow is the evolution equation 
\begin{equation}
    \bgamma_t = - \grad E.
\end{equation}

From the formulas \eqref{eq:dL} and \eqref{eq:dB} given in Appendix, \cref{scheme:vanila} applied to the Willmore flow is as follows,
which coincides with the scheme proposed in \cite{Kem17}.
\begin{scheme}\label{scheme:Willmore-vanila}
    For given $\bgamma^{n-1} \in \bV_h$, find $\bgamma^n \in \bV_h$ and $\gradd E^n \in \bV_h$ that satisfy 
    \begin{equation}
        \innern*{\frac{\bgamma^n - \bgamma^{n-1}}{\dt_n}}{\bv}
            = - \innern{\gradd E^n}{\bv},  \qquad \forall \bv \in \bV_h
            \label{eq:scheme-Willmore}
    \end{equation}
    and 
    \begin{multline}
        \innern*{\gradd E^n}{\bw}
            = -\int_0^1 \mathbf{G}  \cdot \bw_u du 
             + \int_0^1 H J \bgamma^{n-1/2}_u \cdot \bw_{uu} du \\
             - \int_0^1 H J \bgamma^{n-1/2}_{uu} \cdot \bw_u du
             + k_0 \int_0^1 \bT \cdot \bw_u du, 
                \qquad \forall \bw \in \bV_h, 
            \label{eq:scheme-dE}
    \end{multline}
    where $\dt_n > 0$. 
\end{scheme}

\cref{scheme:tangential} applied to the Willmore flow is as follows. 
\begin{scheme}\label{scheme:Willmore-tangential}
    For given $\bgamma^{n-1} \in \bV_h$, find $\bgamma^n \in \bV_h$, $\gradd E^n \in \bV_h$, $W^n \in V_h$, and $\lambda^n \in \bR$ that satisfy four equations
    \begin{gather}
        \innern*{\frac{\bgamma^n - \bgamma^{n-1}}{\dt_n}}{\bv}
            = (-1+\lambda^n) \innern{\gradd E^n}{\bv} + \innern{W^n \dtau}{\bv},  \qquad \forall \bv \in \bV_h,\\[1ex]
        \int_0^1 W^n \varphi du = \alpha_0 \int_0^1 \frac{\varphi_u}{|\bgamma^{n-1/2}_u|} du, 
                \qquad \forall \varphi \in V_h,
        \label{eq:scheme-TV} \\[1ex]
        \lambda^n \innern{\gradd E^n}{\gradd E^n} + \innern{W^n \dtau}{\gradd E^n} = 0,
    \end{gather}
    and \eqref{eq:scheme-dE}.
\end{scheme}

\begin{example}[Willmore flow: effect of tangnetial velocities]\label{ex:Willmore}
    Let us observe numerical results of the Willmore flow with the initial curve given by
    \begin{equation}
        \bgamma(u) = r(u) \begin{bmatrix} \cos (\theta(u)) \\ \sin (\theta(u)) \end{bmatrix},
    \end{equation}
    where 
    \begin{equation}
        r(u) = 1 + 0.2 \sin (f(u)) + 0.4 \cos (f(u)), 
        \quad 
        f(u) = 2\pi u + \sin(4\pi u),
    \end{equation}
    and
    \begin{equation}
        \theta(u) = -\frac{10}{\pi} \paren*{\cos(2 \pi u) + \frac{1}{9} \cos(6 \pi u)}.
    \end{equation}

    We compare \cref{scheme:Willmore-vanila} with \cref{scheme:Willmore-tangential} in order to observe the effect of tangential velocities.
    The initial B-spline curve is given by the $L^2$-projection of the above curve onto the space $\bV_h$, 
    which is plotted in \cref{fig:Willmore-1}.
    The small circles in the figure are control points (see \eqref{eq:B-spline} in the beginning of section~\ref{sec:proposed}).
    Here, the number of control points $N$ and the degree of the B-spline function of the space $\bV_h$ are 
    \begin{equation}
        N = 25, \quad p=3.
    \end{equation}
    
    \begin{figure}
        \centering 
        \includegraphics[page=2,scale=0.8]{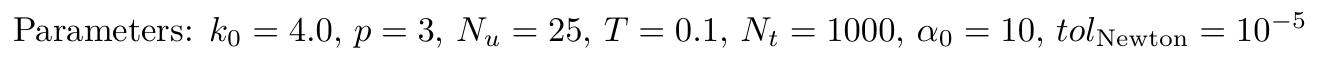}
        \caption{The initial curve for \cref{ex:Willmore}. The small circles are the control points.}
        \label{fig:Willmore-1}
    \end{figure}

    In this example, we choose time increments adaptively according to the dissipation speed.
    More precisely, we set 
    \begin{align}
        \dt_1 &= \min \left\{ \tau, \,  \| \gradd E(\bgamma^0,\bgamma^0) \|_{\bgamma^0}^{-2} \right\}, \\[1ex] 
        \dt_n &= \min \left\{ \tau, \,  \paren*{ \frac{E^{n-2} - E^{n-1}}{\dt_{n-1}} }^{-1} \right\} \quad (n \ge 2),
    \end{align}
    where $\tau>0$ is a given parameter and here we set $\tau = 10^{-4}$.
    Here, the vector $\gradd E(\bgamma^0,\bgamma^0)$ coincides with the $L^2$-projection of $\grad E(\bgamma^0)$ by \eqref{eq:disc-grad-2}. 
    The choice of $\dt_1$ is based on the relation 
    \begin{equation}
        \frac{d}{dt} E[\bgamma(t)] = - \| \grad E(\bgamma) \|_\bgamma^2,
    \end{equation}
    where $\bgamma$ is the solution of the Willmore flow.
    Our choice means $\dt_n \approx \left| \frac{dE}{dt} \right|^{-1}$ when the dissipation speed is fast.
    Finally we set the parameter of the elastic energy $k_0 = 4$, the maximum computation time $T = 0.1$, and the magnitude of the tangential velocity $\alpha_0 = 10$.

    Let us see the result by \cref{scheme:Willmore-vanila}.
    \cref{fig:Willmore-noTV-1} shows the numerical result at $t=0$ and $t \approx 0.01,0.02,0.025,0.03,0.34$.
    One can observe that the distribution of control points gets non-uniform as time passes, and there appears extremely dense parts.
    After $t \approx 0.034$, the numerical computation broke down; namely, the Newton method did not converge.

    \begin{figure}[htb]
        \centering 
        \begin{tabular}{ccc}
            \includegraphics[page=2,width=0.3\linewidth]{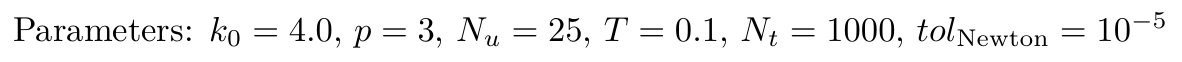}
            &
            \includegraphics[page=4,width=0.3\linewidth]{Willmore-noTV.pdf}
            &
            \includegraphics[page=6,width=0.3\linewidth]{Willmore-noTV.pdf}
            \\[1ex]
            \includegraphics[page=7,width=0.3\linewidth]{Willmore-noTV.pdf}
            &
            \includegraphics[page=8,width=0.3\linewidth]{Willmore-noTV.pdf}
            &
            \includegraphics[page=9,width=0.3\linewidth]{Willmore-noTV.pdf}
        \end{tabular}

        \caption{Numerical result of \cref{scheme:Willmore-vanila}}
        \label{fig:Willmore-noTV-1}
    \end{figure}

    We see detailed behavior before the breakdown.
    \cref{fig:Willmore-noTV-2} shows the same numerical result at $t \approx 0.031,0.032,0.033,0.034$.
    Only control points are plotted.
    Let $\bP_j$ be the $j$-th control point for each B-spline curve.
    Then, it is observed that $\bP_{21}$ and $\bP_{22}$ got very close at $t \approx 0.031$, then overlapped at $t \approx 0.033$ and finally passed each other at $t \approx 0.034$.
    This may cause the breakdown.
    
    \begin{figure}[htb]
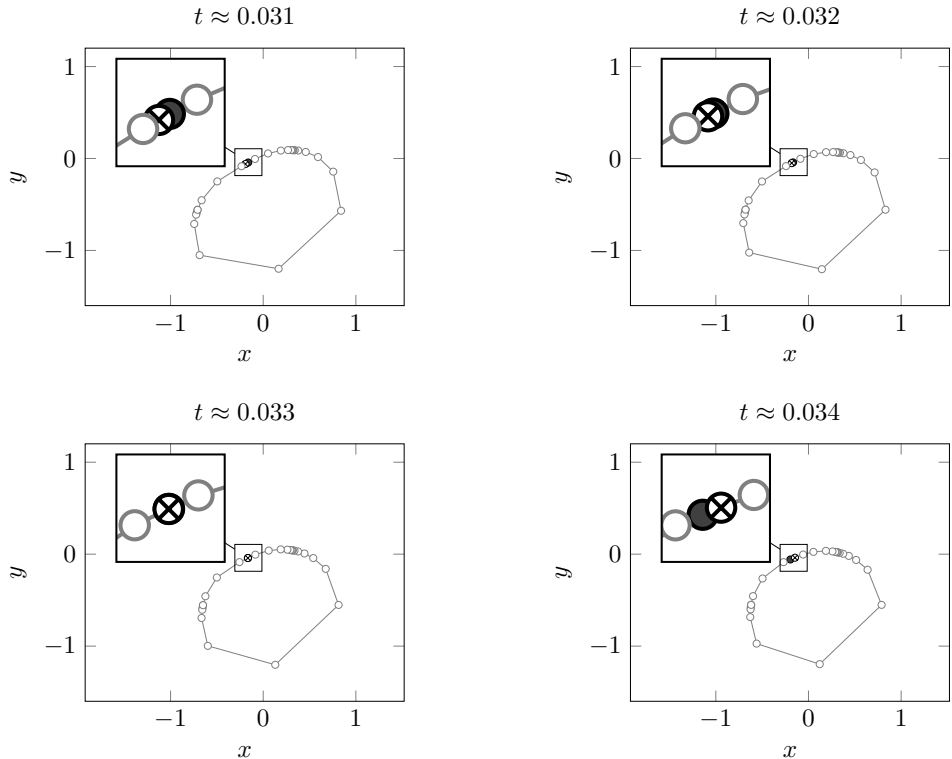

        \centering 
        \begin{tabular}{ccc}
            \includegraphics[page=10,scale=0.95]{Willmore-noTV.pdf}
            &
            \hspace*{1cm}
            &
            \includegraphics[page=11,scale=0.95]{Willmore-noTV.pdf}
            \\[1ex]
            \includegraphics[page=12,scale=0.95]{Willmore-noTV.pdf}
            &

            &
            \includegraphics[page=13,scale=0.95]{Willmore-noTV.pdf}
        \end{tabular}
        
        \caption{Behavior of control points of \cref{scheme:Willmore-vanila} before breakdown. The filled circle and $\otimes$ express $\bP_{21}$ and $\bP_{22}$, respectively.}
        \label{fig:Willmore-noTV-2}
    \end{figure}

    Next we see the results by \cref{scheme:Willmore-tangential}, which is plotted in \cref{fig:Willmore-TV-1,fig:Willmore-TV-2}.
    \cref{fig:Willmore-TV-1} shows the behavior of the curve and the control points.
    One can observe that overcrowding of control points is overcome and the numerical solution is stably computed.
    The behavior of tangential velocities, which is the most important factor of \cref{scheme:Willmore-tangential}, is illustrated in \cref{fig:Willmore-TV-2}.
    One can see that the tangential velocities force control points to avoid getting close.

    \begin{figure}[htb]
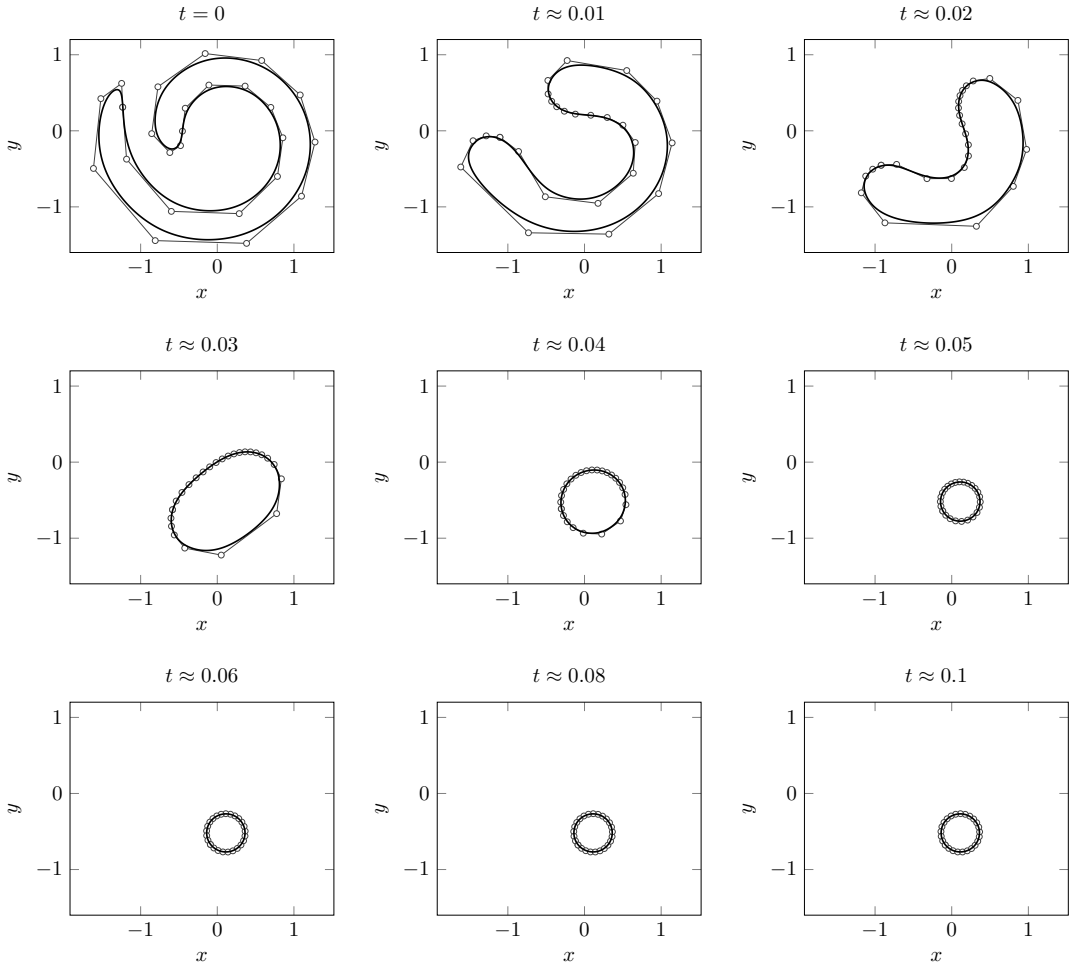

        \centering 
        \begin{tabular}{ccc}
            \includegraphics[page=3,width=0.3\linewidth]{Willmore.pdf}
            &
            \includegraphics[page=4,width=0.3\linewidth]{Willmore.pdf}
            &
            \includegraphics[page=5,width=0.3\linewidth]{Willmore.pdf}
            \\[1ex]
            \includegraphics[page=6,width=0.3\linewidth]{Willmore.pdf}
            &
            \includegraphics[page=7,width=0.3\linewidth]{Willmore.pdf}
            &
            \includegraphics[page=8,width=0.3\linewidth]{Willmore.pdf}
            \\[1ex]
            \includegraphics[page=9,width=0.3\linewidth]{Willmore.pdf}
            &
            \includegraphics[page=10,width=0.3\linewidth]{Willmore.pdf}
            &
            \includegraphics[page=11,width=0.3\linewidth]{Willmore.pdf}
        \end{tabular}

        \caption{Numerical result of \cref{scheme:Willmore-tangential}}
        \label{fig:Willmore-TV-1}
    \end{figure}

    \begin{figure}[htb]
        \centering 
        \begin{tabular}{ccc}
            \includegraphics[page=12,width=0.3\linewidth]{Willmore.pdf}
            &
            \includegraphics[page=13,width=0.3\linewidth]{Willmore.pdf}
            &
            \includegraphics[page=14,width=0.3\linewidth]{Willmore.pdf}
            \\[1ex]
            \includegraphics[page=15,width=0.3\linewidth]{Willmore.pdf}
            &
            \includegraphics[page=16,width=0.3\linewidth]{Willmore.pdf}
            &
            \includegraphics[page=17,width=0.3\linewidth]{Willmore.pdf}
            \\[1ex]
            \includegraphics[page=18,width=0.3\linewidth]{Willmore.pdf}
            &
            \includegraphics[page=19,width=0.3\linewidth]{Willmore.pdf}
            &
            \includegraphics[page=20,width=0.3\linewidth]{Willmore.pdf}
        \end{tabular}

        \caption{Tangential velocities of the result of \cref{scheme:Willmore-tangential}. The vector fields are scaled down $1/1000$ times.}
        \label{fig:Willmore-TV-2}

    \end{figure}

    Let us see the energy dissipation.
    \cref{fig:Willmore-TV-energy} shows the evolution of the elastic energy.
    One can observe that the energy dissipation is preserved, which supports \cref{lem:structure-preserving}.
    Further, the dissipation speed is very fast when $t$ is small.
    Therefore, the time increment $\dt_n$ is smaller than $\tau = 10^{-4}$ for such steps, which is illustrated in \cref{fig:Willmore-TV-dt}.
    This example suggests that our scheme is efficient for equations with drastic dissipation.

    \begin{figure}[htb]
        \centering 
        \begin{minipage}[t]{0.45\linewidth}
            \includegraphics[page=21,scale=0.85]{Willmore.pdf}
            \caption{Evolution of the elastic energy (\cref{scheme:Willmore-tangential})}
            \label{fig:Willmore-TV-energy}
        \end{minipage}
        \hfill 
        \begin{minipage}[t]{0.45\linewidth}
            \includegraphics[page=23,scale=0.85]{Willmore.pdf}
            \caption{Time increments (\cref{scheme:Willmore-tangential})}
            \label{fig:Willmore-TV-dt}
        \end{minipage}
    \end{figure}

\end{example}

\begin{example}[Willmore flow: behavior of the Lagrange multiplier for dissipation]
    \label{ex:Willmore-lambda}
    In this example, we observe the behavior of the Lagrange multiplier $\lambda$ for energy dissipation in \cref{scheme:Willmore-tangential} when $\dt \to 0$.
    As mentioned in \cref{rem:multiplier}, it is expected that $\max_n |\lambda^n| \to 0$ when $\dt \to 0$.

    To verify this observation numerically, we computed the Willmore flow with the initial curve 
    \begin{equation}
        \bgamma(u) = 
        \begin{bmatrix}
            2 \cos(\theta(u)) \\ \sin(\theta(u))
        \end{bmatrix}
        ,\quad \theta(u) = 2\pi u - 0.4 \sin(4\pi u),
    \end{equation}
    and with parameters 
    \begin{equation}
        k_0 = 1, \quad p = 3, \quad N = 20, \quad T=4,\quad \alpha_0 = 1.
    \end{equation}
    The initial and final curves with an uniform time increment $\dt = T/400$ are plotted in \cref{fig:Willmore-lambda-curve}.
    The initial curve has dense control points around $x=\pm 2$, but the distribution of the final curve is almost uniform.

    We set $\dt = T/(100 \cdot 2^i)$ for $i=0,1,\dots,6$ and computed $\max_n |\lambda^n|$ for each $\dt$.
    The result is plotted in \cref{fig:Willmore-lambda}.
    One can see that $\max_n |\lambda^n|$ tends to decrease when $\dt \to 0$.
    However, this value saturates when $\dt \approx 10^{-3}$.
    This may be because the number of control points is relatively small; namely, rough spatial discretization may cause the saturation.

    \begin{figure}[htb]
        \centering 
        \begin{minipage}[t]{0.45\linewidth}
            \includegraphics[page=2,scale=0.9]{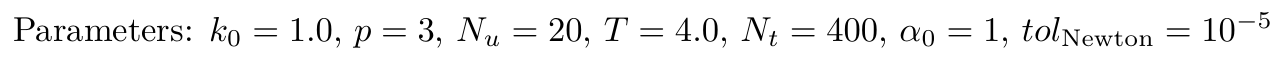}
            \caption{Solution of \cref{ex:Willmore-lambda}}
            \label{fig:Willmore-lambda-curve}
        \end{minipage}
        \hfill 
        \begin{minipage}[t]{0.45\linewidth}
            \includegraphics[page=4,scale=0.85]{Willmore_lambda.pdf}
            \caption{Relationship between $\lambda^n$ and $\dt$ (\cref{ex:Willmore-lambda})}
            \label{fig:Willmore-lambda}
        \end{minipage}
    \end{figure}

\end{example}

\subsection{One constraint case: area-preserving Willmore flow}

In this section, to see the effectiveness of \cref{scheme:tangential} with a constraint, we consider the area-preserving Willmore flow, which is the $L^2$-gradient flow of the elastic energy $E$ with a constraint that the area $A$ is preserved.
We set $k_0 = 0$ in the definition of $E$.
That is, the constrained gradient flow
\begin{equation}
    \bgamma_t = - \grad B + \lambda \grad A, \quad \lambda = \frac{\innergrad{B}{A}}{\innergrad{A}{A}}
\end{equation}
is the target of this section.

From the formulas \eqref{eq:dA} and \eqref{eq:dB}, \cref{scheme:tangential} applied to the area-preserving Willmore flow is as follows.
\begin{scheme}\label{scheme:APW-tangential}
    For given $\bgamma^{n-1} \in \bV_h$, find $\bgamma^n \in \bV_h$, $\gradd B^n \in \bV_h$, $\gradd A^n \in \bV_h$, $W^n \in V_h$, $\lambda_B^n \in \bR$, and $\lambda_A^n \in \bR$ that satisfy the six equations 
    \begin{multline}
        \innern*{\frac{\bgamma^n - \bgamma^{n-1}}{\dt_n}}{\bv}
            = (-1+\lambda^n_B) \innern{\gradd B^n}{\bv} \\ 
             + \lambda_A^n \innern{\gradd A^n}{\bv} + \innern{W^n \dtau}{\bv},  \qquad \forall \bv \in \bV_h, 
    \end{multline}
    \begin{multline}
        \innern*{\gradd B^n}{\bw}
            = - \int_0^1 \mathbf{G}  \cdot {\bw}_u du 
             + \int_0^1 H J \bgamma^{n-1/2}_u \cdot {\bw}_{uu} du 
             - \int_0^1 H J \bgamma^{n-1/2}_{uu} \cdot {\bw}_u du,\\ 
                \qquad \forall \bw \in \bV_h,
                \label{eq:scheme-dB}
    \end{multline}
    \begin{equation}
        \innern*{\gradd A^n}{\bw}
            = -\int_0^1 J \bgamma^{n-1/2} \cdot \bw du, \qquad \forall \bw \in \bV_h,
            \label{eq:scheme-dA}
    \end{equation}
    \begin{multline}
        (-1+\lambda_B^n) \innergradd{B^n}{B^n} + \lambda_A^n \innergrad{A^n}{B^n} \\
            + \innern{W^n \dtau}{\gradd B^n} = -\frac{D(\gradd B^n, \gradd A^n)}{\innergradd{A^n}{A^n}}, 
    \end{multline}
    \begin{equation}
        (-1+\lambda_B^n) \innergradd{B^n}{A^n} + \lambda_A^n \innergrad{A^n}{A^n} + \innern{W^n \dtau}{\gradd A^n} = 0,
    \end{equation}
    and \eqref{eq:scheme-TV}.
\end{scheme}

\begin{example}[Area-preserving Willmore flow]\label{ex:APW}
    Let us compute the area-preserving Willmore flow with the initial curve given by 
    \begin{equation}
        \bgamma(u) = r(u) \begin{bmatrix} \cos (\theta(u)) \\ \sin (\theta(u)) \end{bmatrix},
    \end{equation}
    where 
    \begin{equation}
        r(u) = (r_1-r_0)\frac{\cos (f(u)) +1}{2} + r_0 + \varepsilon \sin(8 \pi u)
        \quad 
        f(u) = 2\pi u + \frac{1}{2} \sin(4\pi u),
    \end{equation}
    with $r_0 = 0.5, r_1=1.5, \varepsilon=0.1$, and
    \begin{equation}
        \theta(u) = (\theta_0+\theta_1)\frac{\sin(2 \pi u)+1}{2} - \theta_1
    \end{equation}
    with $\theta_0 = 3 \pi/4, \theta_1 = \pi$.
    The $L^2$-projection onto $\bV_h$ of the initial curve is plotted in \cref{fig:APW-initial}.
    The parameters are 
    \begin{equation}
        T=4, \quad  p = 3, \quad N = 30, \quad  \alpha_0 = 1.
    \end{equation}
    We choose the time increment adaptively according to the dissipation speed as follows:
    \begin{align}
        \dt_1 &= \min \left\{ \tau, \, \frac{\innergradd{A^0}{A^0}}{D_\rd(\gradd B^0, \gradd A^0)}  \right\}, \\[1ex] 
        \dt_n &= \min \left\{ \tau, \,  \paren*{ \frac{B^{n-2} - B^{n-1}}{\dt_{n-1}} }^{-1} \right\} \quad (n \ge 2)
    \end{align}
    with  $\tau = T/1000$, where $\gradd F^0 = \gradd F(\bgamma^0,\bgamma^0) \in \bV_h$ for $F=A,B$ and 
    \begin{equation}
        D_\rd(\gradd B^0, \gradd A^0) = \begin{vmatrix}
            \innergradd{B^0}{B^0} & \innergradd{B^0}{A^0}\\
            \innergradd{B^0}{A^0} & \innergradd{A^0}{A^0}
        \end{vmatrix}.
    \end{equation}
    The definition of $\dt_1$ is based on \eqref{eq:one-dEdt}.

    \begin{figure}[htb]
        \centering\includegraphics[page=2,scale=0.8]{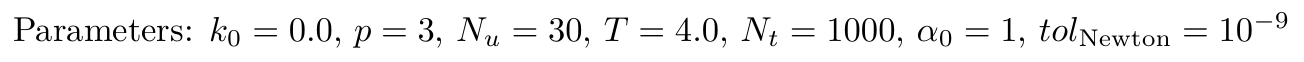}
        \caption{The initial curve for \cref{ex:APW}. The small circles are the control points.}
        \label{fig:APW-initial}
    \end{figure}

    The result is summarized in \cref{fig:APW-result,fig:APW-TV,fig:APW-functionals}.
    \cref{fig:APW-result} shows the evolution of the curve and the control points.
    One can observe that the control points are not overcrowding and finally the distribution becomes almost uniform.
    \cref{fig:APW-TV} shows the tangential velocities of the numerical solution, and it can be seen that tangential velocities provide the good distribution of control points.

    \begin{figure}[htb]
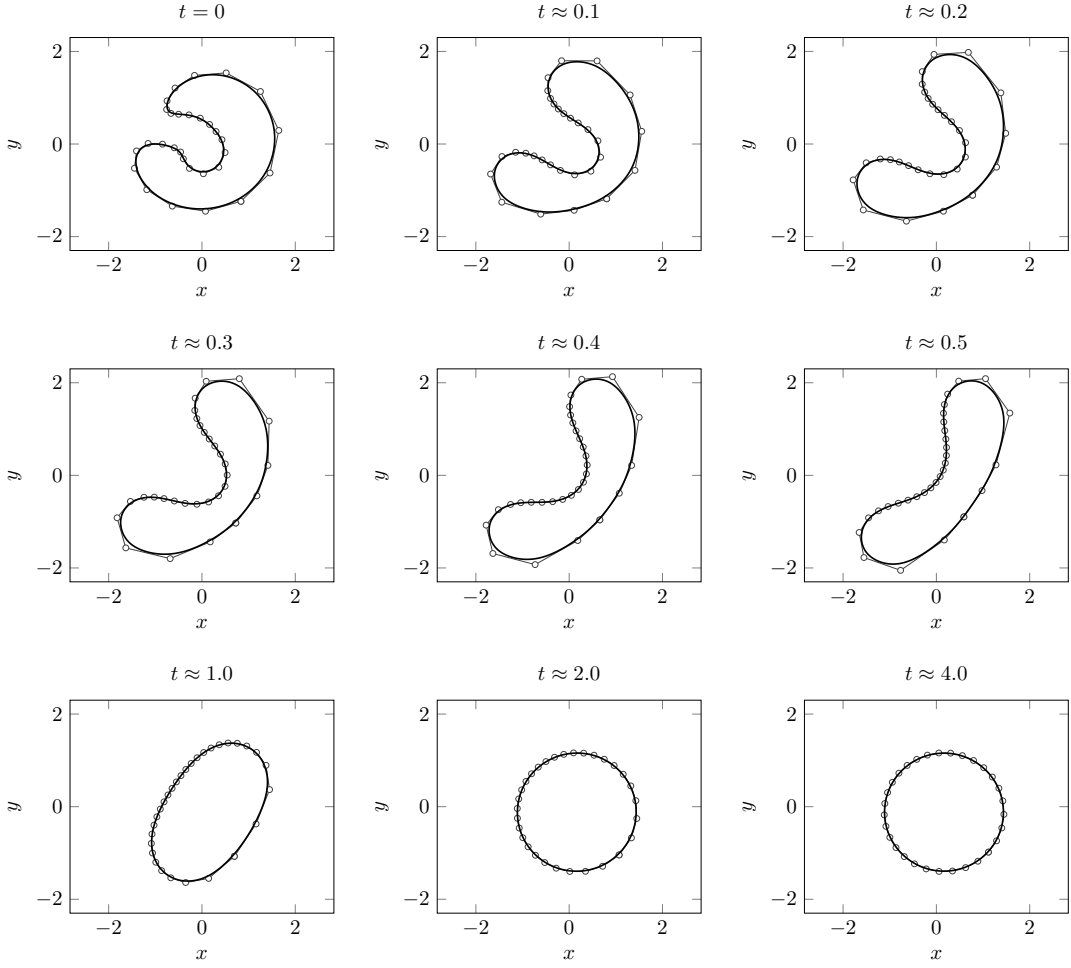

        \centering 
        \begin{tabular}{ccc}
            \includegraphics[page=3,width=0.3\linewidth]{area-preserving-Willmore.pdf}
            &
            \includegraphics[page=4,width=0.3\linewidth]{area-preserving-Willmore.pdf}
            &
            \includegraphics[page=5,width=0.3\linewidth]{area-preserving-Willmore.pdf}
            \\[1ex]
            \includegraphics[page=6,width=0.3\linewidth]{area-preserving-Willmore.pdf}
            &
            \includegraphics[page=7,width=0.3\linewidth]{area-preserving-Willmore.pdf}
            &
            \includegraphics[page=8,width=0.3\linewidth]{area-preserving-Willmore.pdf}
            \\[1ex]
            \includegraphics[page=9,width=0.3\linewidth]{area-preserving-Willmore.pdf}
            &
            \includegraphics[page=10,width=0.3\linewidth]{area-preserving-Willmore.pdf}
            &
            \includegraphics[page=11,width=0.3\linewidth]{area-preserving-Willmore.pdf}
        \end{tabular}

        \caption{Numerical results by \cref{scheme:APW-tangential}}
        \label{fig:APW-result}
    \end{figure}

    \begin{figure}[htb]
        \centering 
        \begin{tabular}{ccc}
            \includegraphics[page=12,width=0.3\linewidth]{area-preserving-Willmore.pdf}
            &
            \includegraphics[page=13,width=0.3\linewidth]{area-preserving-Willmore.pdf}
            &
            \includegraphics[page=14,width=0.3\linewidth]{area-preserving-Willmore.pdf}
            \\[1ex]
            \includegraphics[page=15,width=0.3\linewidth]{area-preserving-Willmore.pdf}
            &
            \includegraphics[page=16,width=0.3\linewidth]{area-preserving-Willmore.pdf}
            &
            \includegraphics[page=17,width=0.3\linewidth]{area-preserving-Willmore.pdf}
            \\[1ex]
            \includegraphics[page=18,width=0.3\linewidth]{area-preserving-Willmore.pdf}
            &
            \includegraphics[page=19,width=0.3\linewidth]{area-preserving-Willmore.pdf}
            &
            \includegraphics[page=20,width=0.3\linewidth]{area-preserving-Willmore.pdf}
        \end{tabular}

        \caption{Tangential velocities of the result of \cref{scheme:APW-tangential}. The vector fields are scaled down $1/200$ times.}
        \label{fig:APW-TV}

    \end{figure}

    The evolution of the bending energy $B$ and the enclosed area $A$ are presented in \cref{fig:APW-functionals}. 
    \cref{fig:APW-functionals} \subref{fig:APW-B} shows that the dissipation property is preserved.
    In \cref{fig:APW-functionals} \subref{fig:APW-A}, the relative difference of the area $(A^n-A^0)/A^0$ is plotted. 
    One can see that this value is approximately equal to machine epsilon and thus the constraint for $A$ is satisfied.
    Hence the effectiveness of \cref{scheme:APW-tangential} is observed.

    \begin{figure}[htb]
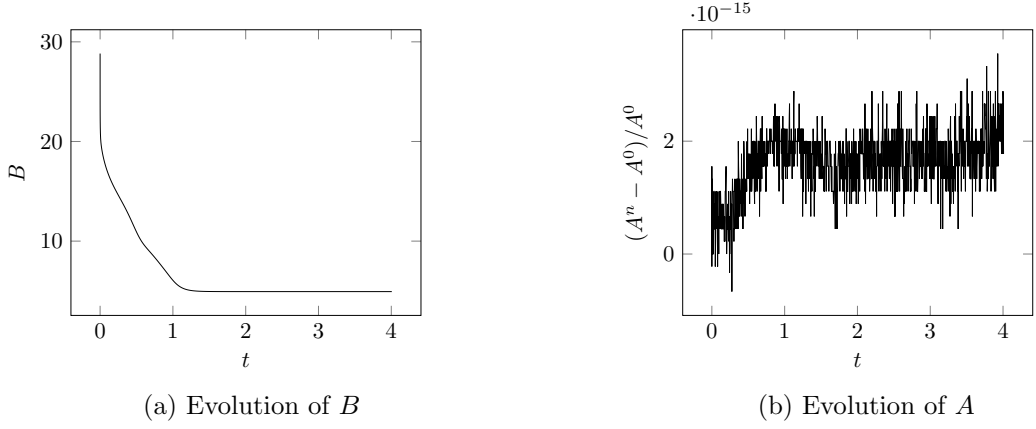

        \centering 
        \begin{subfigure}[t]{0.45\linewidth}
            \includegraphics[page=21,scale=0.85]{area-preserving-Willmore.pdf}
            \caption{Evolution of $B$}
            \label{fig:APW-B}
        \end{subfigure}
        \hfill 
        \begin{subfigure}[t]{0.45\linewidth}
            \includegraphics[page=22,scale=0.85]{area-preserving-Willmore.pdf}
            \caption{Evolution of $A$}
            \label{fig:APW-A}
        \end{subfigure}
        \caption{Evolution of functionals for \cref{ex:APW}}
        \label{fig:APW-functionals}
    \end{figure}

\end{example}

\subsection{Two constraints case: Helfrich flow}

In this section, we observe that \cref{scheme:tangential} is effective even for the two constraints case.
To do this, we consider the Helfrich flow, which is the gradient flow of the bending energy $B$ with a constraint that the area $A$ and the length $L$ are preserved,
which is expressed as
\begin{equation}
    \bgamma_t = - \grad B + \lambda_A \grad A + \lambda_L \grad L, 
\end{equation}
with the Lagrange multipliers determined by 
\begin{equation}
    \begin{cases}
        - \innergrad{B}{A} + \lambda_A \innergrad{A}{A} + \lambda_L \innergrad{L}{A} = 0, \\ 
        - \innergrad{B}{L} + \lambda_A \innergrad{A}{L} + \lambda_L \innergrad{L}{L} = 0. 
    \end{cases}
\end{equation}

From the formulas \eqref{eq:dA}, \eqref{eq:dL}, and \eqref{eq:dB}, \cref{scheme:tangential} applied to the Helfrich flow is as follows.
\begin{scheme}\label{scheme:Helfrich-tangential}
    For given $\bgamma^{n-1} \in \bV_h$, find $\bgamma^n \in \bV_h$, $\gradd B^n \in \bV_h$, $\gradd A^n \in \bV_h$, $\gradd L^n \in \bV_h$, $W^n \in V_h$, $\lambda_B^n \in \bR$, $\lambda_A^n \in \bR$, and $\lambda_L^n \in \bR$ that satisfy the eight equations
    \begin{multline}
        \innern*{\frac{\bgamma^n - \bgamma^{n-1}}{\dt_n}}{\bv}
            = (-1+\lambda^n_B) \innern{\gradd B^n}{\bv} \\[1ex]
             + \lambda_A^n \innern{\gradd A^n}{\bv} + \lambda_L^n \innern{\gradd L^n}{\bv} + \innern{W^n \dtau}{\bv}, \qquad \forall \bv \in \bV_h, 
    \end{multline}
    \begin{equation}
        \innern*{\gradd L^n}{\bw_L}
            = \int_0^1 \hat{\mathbf{T}} \cdot \bw_L du, \qquad \forall \bw_L \in \bV_h, 
    \end{equation}
    \begin{multline}
        (-1+\lambda_B^n) \innergradd{B^n}{B^n} + \lambda_A^n \innergrad{A^n}{B^n} \\
            + \lambda_L^n \innergrad{L^n}{B^n} + \innern{W^n \dtau}{\gradd B^n} = -\frac{D_\rd(\gradd B^n, \gradd A^n, \gradd L^n)}{D_\rd(\gradd A^n, \gradd L^n)},
    \end{multline}
    \begin{multline}
        (-1+\lambda_B^n) \innergradd{B^n}{A^n} + \lambda_A^n \innergrad{A^n}{A^n} + \lambda_L^n \innergrad{L^n}{A^n} \\ 
        + \innern{W^n \dtau}{\gradd A^n} = 0,
    \end{multline}
    \begin{multline}
        (-1+\lambda_B^n) \innergradd{B^n}{L^n} + \lambda_A^n \innergrad{A^n}{L^n} + \lambda_L^n \innergrad{L^n}{L^n} \\ 
        + \innern{W^n \dtau}{\gradd L^n} = 0,
    \end{multline}
    \eqref{eq:scheme-dB}, \eqref{eq:scheme-dA}, and \eqref{eq:scheme-TV}.
\end{scheme}

\begin{example}[Helfrich flow]\label{ex:Helfrich-1}
    We compute the Helfrich flow with the initial curve given by 
    \begin{equation}
        \bgamma(u) = r(g(u)) \begin{bmatrix} \cos (\theta(g(u))) \\ \sin (\theta(g(u))) \end{bmatrix},
        \quad 
        g(u) = u - \frac{1}{3\pi} \sin(2 \pi u),
    \end{equation}
    where $r(\cdot)$ and $\theta(\cdot)$ are the same as in \cref{ex:APW}.
    The $L^2$-projection of the initial curve is plotted in \cref{fig:Helfrich-initial}.
    The parameters are 
    \begin{equation}
        T=2, \quad  p = 3, \quad N = 30, \quad  \alpha_0 = 1.
    \end{equation}
    We choose the time increment adaptively according to the dissipation speed as follows:
    \begin{align}
        \dt_1 &= \min \left\{ \tau, \, \frac{ D_\rd (\gradd A^0, \gradd L^0)}{ D_\rd (\gradd B^0, \gradd A^0, \gradd L^0)}  \right\}, \\[1ex] 
        \dt_n &= \min \left\{ \tau, \,  \paren*{ \frac{B^{n-2} - B^{n-1}}{\dt_{n-1}} }^{-1} \right\} \quad (n \ge 2)
    \end{align}
    with  $\tau = T/2000$, where $\gradd F^0 = \gradd F(\bgamma^0,\bgamma^0) \in \bV_h$ for $F=A,L,B$ and 
    \begin{gather}
        D_\rd(\gradd A^0, \gradd L^0) = \begin{vmatrix}
            \innergrada{A^0}{A^0} & \innergrada{A^0}{L^0} \\
            \innergrada{L^0}{A^0} & \innergrada{L^0}{L^0}
        \end{vmatrix},
        \\
        D_\rd(\gradd B^0, \gradd A^0, \gradd L^0) = \begin{vmatrix}
            \innergrada{B^0}{B^0} & \innergrada{B^0}{A^0} & \innergrada{B^0}{L^0}\\
            \innergrada{A^0}{B^0} & \innergrada{A^0}{A^0} & \innergrada{A^0}{L^0}\\
            \innergrada{L^0}{B^0} & \innergrada{L^0}{A^0} & \innergrada{L^0}{L^0}
        \end{vmatrix}.
    \end{gather}
    Here, $(\cdot,\cdot)$ is the inner product over $\bgamma^{n-1/2}$.
    The definition of $\dt_1$ is based on \eqref{eq:two-dEdt-2}.

    \begin{figure}[htb]
        \centering\includegraphics[page=2,scale=0.8]{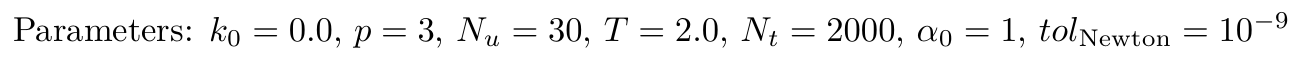}
        \caption{The initial curve for \cref{ex:Helfrich-1}. The small circles are the control points.}
        \label{fig:Helfrich-initial}
    \end{figure}

    The result is summarized in \cref{fig:Helfrich-result,fig:Helfrich-TV,fig:Helfrich-functionals}.
    \cref{fig:Helfrich-result} shows the evolution of the curve and the control points.
    The curve evolves slowly and converges to a non-trivial shape, which resembles the shape of red blood cells.
    One can observe the good behavior of the control points as in the examples above.
    \cref{fig:Helfrich-TV} shows the tangential velocities of the numerical solution, and it can be seen that tangential velocities work well.

    \begin{figure}[htb]
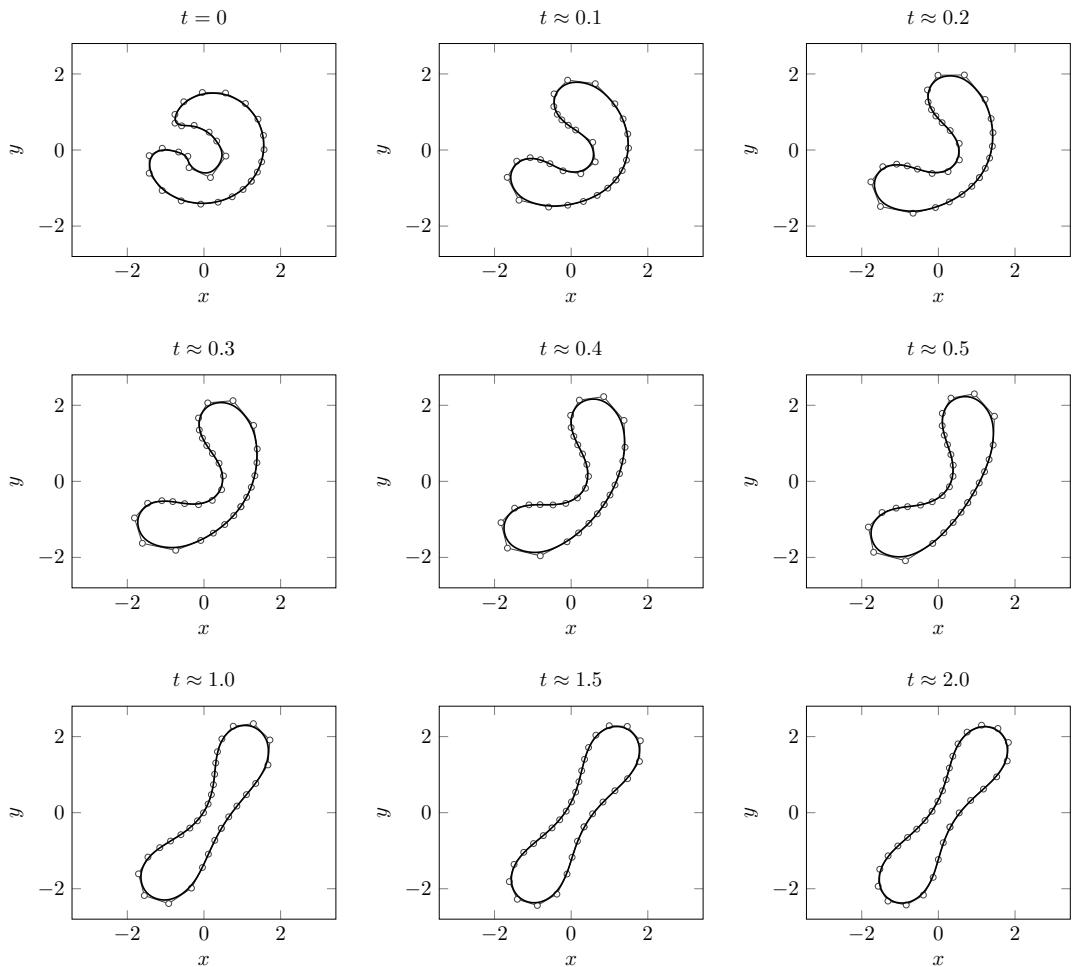

        \centering 
        \begin{tabular}{ccc}
            \includegraphics[page=3,width=0.3\linewidth]{Helfrich.pdf}
            &
            \includegraphics[page=4,width=0.3\linewidth]{Helfrich.pdf}
            &
            \includegraphics[page=5,width=0.3\linewidth]{Helfrich.pdf}
            \\[1ex]
            \includegraphics[page=6,width=0.3\linewidth]{Helfrich.pdf}
            &
            \includegraphics[page=7,width=0.3\linewidth]{Helfrich.pdf}
            &
            \includegraphics[page=8,width=0.3\linewidth]{Helfrich.pdf}
            \\[1ex]
            \includegraphics[page=9,width=0.3\linewidth]{Helfrich.pdf}
            &
            \includegraphics[page=10,width=0.3\linewidth]{Helfrich.pdf}
            &
            \includegraphics[page=11,width=0.3\linewidth]{Helfrich.pdf}
        \end{tabular}

        \caption{Numerical results by \cref{scheme:Helfrich-tangential}}
        \label{fig:Helfrich-result}
    \end{figure}

    \begin{figure}[htb]
        \centering 
        \begin{tabular}{ccc}
            \includegraphics[page=12,width=0.3\linewidth]{Helfrich.pdf}
            &
            \includegraphics[page=13,width=0.3\linewidth]{Helfrich.pdf}
            &
            \includegraphics[page=14,width=0.3\linewidth]{Helfrich.pdf}
            \\[1ex]
            \includegraphics[page=15,width=0.3\linewidth]{Helfrich.pdf}
            &
            \includegraphics[page=16,width=0.3\linewidth]{Helfrich.pdf}
            &
            \includegraphics[page=17,width=0.3\linewidth]{Helfrich.pdf}
            \\[1ex]
            \includegraphics[page=18,width=0.3\linewidth]{Helfrich.pdf}
            &
            \includegraphics[page=19,width=0.3\linewidth]{Helfrich.pdf}
            &
            \includegraphics[page=20,width=0.3\linewidth]{Helfrich.pdf}
        \end{tabular}

        \caption{Tangential velocities of the result of \cref{scheme:Helfrich-tangential}. The vector fields are scaled down $1/50$ times.}
        \label{fig:Helfrich-TV}

    \end{figure}

    The evolution of the functionals $B$, $A$, and $L$ are plotted in \cref{fig:Helfrich-functionals}.
    Figures~\ref{fig:Helfrich-functionals} \subref{fig:Helfrich-B} shows that the dissipation property is preserved.
    In \cref{fig:Helfrich-functionals} \subref{fig:Helfrich-A} and \subref{fig:Helfrich-L}, the relative differences $(F^n-F^0)/F^0$ for $F=A,L$ are plotted. 
    One can see that this value is approximately equal to machine epsilon and thus the constraints for both $A$ and $L$ are satisfied.
    Hence the effectiveness of \cref{scheme:Helfrich-tangential} can be observed even for the two-constraints case.

    \begin{figure}[htb]
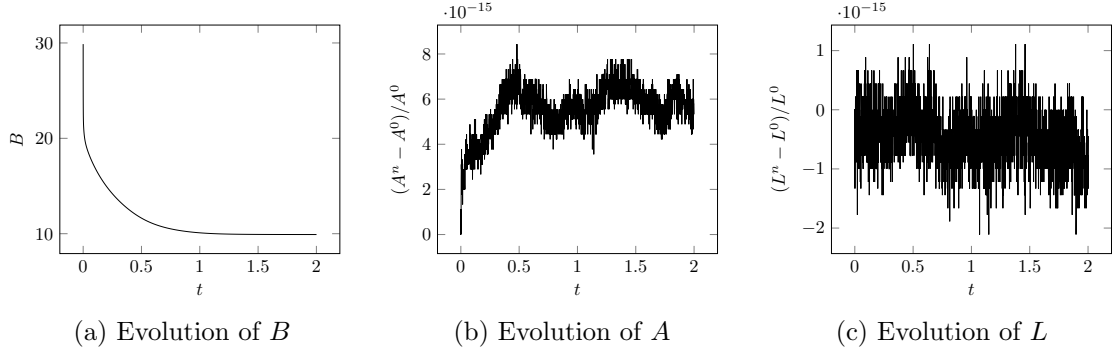

        \centering 
        \begin{subfigure}[t]{0.32\linewidth}
            \includegraphics[page=21,scale=0.68]{Helfrich.pdf}
            \caption{Evolution of $B$}
            \label{fig:Helfrich-B}
        \end{subfigure}
        \hfill 
        \begin{subfigure}[t]{0.32\linewidth}
            \includegraphics[page=22,scale=0.68]{Helfrich.pdf}
            \caption{Evolution of $A$}
            \label{fig:Helfrich-A}
        \end{subfigure}
        \hfill 
        \begin{subfigure}[t]{0.32\linewidth}
            \includegraphics[page=23,scale=0.68]{Helfrich.pdf}
            \caption{Evolution of $L$}
            \label{fig:Helfrich-L}
        \end{subfigure}
        \caption{Evolution of functionals for \cref{ex:Helfrich-1}}
        \label{fig:Helfrich-functionals}
    \end{figure}
\end{example}

\section{Concluding remarks}
\label{sec:conclusion}

In this paper, we constructed structure-preserving numerical methods for gradient flows of planar curves that may have one or more constraints.
Our numerical methods are based on the extended discrete gradient method introduced in \cite{Kem17}, and to preserve the constraints, we determined the Lagrange multipliers in an appropriate way.
Furthermore, we added tangential velocities to make numerical computation stable, as introduced in the literature.
The novel point is introducing a new Lagrange multiplier to attain both energy dissipation and stabilization by tangential velocities.
Numerical experiments showed that our proposed methods worked well.

However, we did not address mathematical properties of our schemes, such as well-posedness, convergence to exact solutions, and error estimates.
Furthermore, the proposed schemes are reduced to large nonlinear systems, especially for multiply constrained cases (see \cref{scheme:Helfrich-tangential}),
which yields that numerical costs of the schemes are very expensive.
Some linearization techniques proposed in \cite{MF01,SXY18} might be useful to reduce the numerical costs of our schemes.
Finally, it is not trivial whether the choice of the tangential velocity is appropriate.
Further studies are necessary for these topics.

\section*{Acknowledgments}

The first author was supported by JSPS KAKENHI Grant Numbers 19K14590 and 21H00990.
The second author was supported by JSPS KAKENHI Grant Numbers 20H00581 and 21K18301, JST PRESTO Grant Number JPMJPR2129, and ERI JURP 2022-B-06 in Earthquake Research Institute, the University of Tokyo.
The third author was supported by JSPS KAKENHI Grant Numbers 18K13455 and 22K03425.

\appendix

\section{Examples of discrete gradients}
\label{app:gradients}

In this section, we derive the discrete gradients for functionals given by \eqref{eq:functionals}.
To derive the discrete gradient of an energy $F$ according to the definition \eqref{eq:disc-grad}, we need to calculate the difference
\begin{equation}
    F[\bgamma] - F[\bargamma]
\end{equation}
for curves $\bgamma, \bargamma \in \bV_h$.
In the following, let $g = |\bgamma_u|$ be the local length of the curve $\bgamma$ and let $\barg = |\bargamma_u|$.

\subsection{Area functional}

The area functional is rewritten as 
\begin{equation}
    A[\bgamma] = -\frac{1}{2} \int_0^1 \bgamma \cdot J\bgamma_u du, 
    \qquad J = \begin{bmatrix}
        0 & -1 \\ 1 & 0
    \end{bmatrix}
    \in \bR^{2 \times 2}.
\end{equation}
Thus, we have 
\begin{align}
    A[\bgamma] - A[\bargamma] 
    &= -\frac{1}{2} \int_0^1 (\bgamma \cdot J\bgamma_u - \bargamma \cdot J\bargamma_u) du\\
    &= -\frac{1}{2} \int_0^1 \paren*{ \frac{\bgamma + \bargamma}{2} \cdot J(\bgamma_u - \bargamma_u) + (\bgamma - \bargamma) \cdot J \paren*{\frac{\bgamma_u + \bargamma_u}{2}} } du\\
    &= -\int_0^1 J \paren*{\frac{\bgamma_u + \bargamma_u}{2}} \cdot (\bgamma - \bargamma) du.
\end{align}
Here we used the skew-symmetry of $J$ and the integration by parts.
Therefore, we can define the discrete gradient $\gradd A(\bgamma,\bargamma) \in \bV_h$ implicitly by the following equation:
\begin{equation}
    \inner*{\gradd A(\bgamma,\bargamma)}{\bv}{(\bgamma+\bargamma)/2} 
    = -\int_0^1 J \paren*{\frac{\bgamma_u + \bargamma_u}{2}} \cdot \bv du,
    \quad \forall \bv \in \bV_h.
    \label{eq:dA}
\end{equation}

\subsection{Length functional}

The length functional is rewritten as 
\begin{equation}
    L[\bgamma] = \int_0^1 g du.
\end{equation}
Observe that 
\begin{equation}
    g - \barg = \frac{|\bgamma_u|^2 - |\bargamma_u|^2}{|\bgamma_u| + |\bargamma_u|}
    = \paren*{\frac{\bgamma_u + \bargamma_u}{|\bgamma_u| + |\bargamma_u|}} \cdot (\bgamma_u - \bargamma_u)
    \label{eq:dg}
\end{equation}
and let 
\begin{equation}
    \bT \coloneqq \frac{\bgamma_u + \bargamma_u}{|\bgamma_u| + |\bargamma_u|}.
\end{equation}
Then, we have 
\begin{equation}
    L[\bgamma] - L[\bargamma]
    = \int_0^1 \bT \cdot (\bgamma_u - \bargamma_u) du,
\end{equation}
which allows us to define the discrete gradient $\gradd L(\bgamma,\bargamma) \in \bV_h$ implicitly by
\begin{equation}
    \inner*{\gradd L(\bgamma,\bargamma)}{\bv}{(\bgamma+\bargamma)/2}
    = \int_0^1 \bT \cdot \bv_u du,
    \quad \forall \bv \in \bV_h.
    \label{eq:dL}
\end{equation}

\subsection{Bending energy}

The bending energy is described as
\begin{equation}
    B[\bgamma] = \int_0^1 k^2 g du = \int_0^1 \frac{\det(\bgamma_u,\bgamma_{uu})^2}{g^5} du, 
\end{equation}
where $\det(\bgamma_u,\bgamma_{uu})$ is the determinant of the matrix $\begin{pmatrix} \bgamma_u & \bgamma_{uu} \end{pmatrix} \in \bR^{2 \times 2}$.
Letting $D = \det(\bgamma_u,\bgamma_{uu})$ and  $\bar{D} = \det(\bargamma_u,\bargamma_{uu})$, we have 
\begin{align}
    B[\gamma] - B[\bargamma]
    &= \int_0^1 \paren*{ \frac{D^2}{g^5} - \frac{\bar{D}^2}{\barg^5} } du \\
    &= \int_0^1 \frac{D^2 + \bar{D}^2}{2} \paren*{g^{-5} - \barg^{-5}} du  + \int_0^1 \paren*{D^2 - \bar{D}^2} \frac{g^{-5} + \barg^{-5}}{2} du \\ 
    &\eqqcolon I_1 + I_2.
    \label{eq:dB-1}
\end{align}
We calculate the first term $I_1$. 
Using \eqref{eq:dg}, we have 
\begin{align}
    I_1 
    &= -\int_0^1 \frac{D^2 + \bar{D}^2}{2} g^{-5} \barg^{-5} \sum_{l=0}^4 g^l \barg^{4-l} (g-\barg) du\\ 
    &= -\int_0^1 \frac{D^2 + \bar{D}^2}{2} g^{-5} \barg^{-5} \sum_{l=0}^4 g^l \barg^{4-l} \bT \cdot (\bgamma_u - \bargamma_u) du.
    \label{eq:dB-2}
\end{align}
Let us address $I_2$. By the multi-linearity of the determinant, we have 
\begin{align}
    D - \bar{D} 
    &= \det \paren*{ \frac{\bgamma_u + \bargamma_u}{2}, \bgamma_{uu} - \bargamma_{uu} }
     + \det \paren*{ \bgamma_u - \bargamma_u, \frac{\bgamma_{uu} + \bargamma_{uu}}{2} } \\ 
    &= J\paren*{ \frac{\bgamma_u + \bargamma_u}{2}} \cdot \paren*{ \bgamma_{uu} - \bargamma_{uu} }
    - J\paren*{ \frac{\bgamma_{uu} + \bargamma_{uu}}{2}} \cdot \paren*{ \bgamma_u - \bargamma_u },
\end{align}
where we used the identity $\det(\mathbf{u}, \bv ) = (J \mathbf{u}) \cdot \bv$ for $\mathbf{u}, \bv \in \bR^2$.
This implies 
\begin{align}
    I_2 
    &= \int_0^1 \frac{g^{-5} + \barg^{-5}}{2} \paren*{D + \bar{D}} \paren*{D - \bar{D}}  du \\ 
    &= \int_0^1 \frac{g^{-5} + \barg^{-5}}{2} \paren*{D + \bar{D}} J\paren*{ \frac{\bgamma_u + \bargamma_u}{2}} \cdot \paren*{ \bgamma_{uu} - \bargamma_{uu} } du \\ 
    &\qquad  - \int_0^1 \frac{g^{-5} + \barg^{-5}}{2} \paren*{D + \bar{D}} J\paren*{ \frac{\bgamma_{uu} + \bargamma_{uu}}{2}} \cdot \paren*{ \bgamma_u - \bargamma_u } du.
    \label{eq:dB-3}
\end{align}
Summarizing \eqref{eq:dB-1}, \eqref{eq:dB-2}, and \eqref{eq:dB-3}, we can define the discrete gradient $\gradd B(\bgamma,\bargamma) \in \bV_h$ implicitly by
\begin{align}
    \inner*{\gradd B(\bgamma,\bargamma)}{\bv}{(\bgamma+\bargamma)/2}
    = &- \int_0^1 \frac{D^2 + \bar{D}^2}{2} g^{-5} \barg^{-5} \sum_{l=0}^4 g^l \barg^{4-l} \bT \cdot \bv_u du \\
      &+ \int_0^1 \frac{g^{-5} + \barg^{-5}}{2} \paren*{D + \bar{D}} J\paren*{ \frac{\bgamma_u + \bargamma_u}{2}} \cdot \bv_{uu} du \\
      &- \int_0^1 \frac{g^{-5} + \barg^{-5}}{2} \paren*{D + \bar{D}} J\paren*{ \frac{\bgamma_{uu} + \bargamma_{uu}}{2}} \cdot \bv_u du
    \quad \forall \bv \in \bV_h.
    \label{eq:dB}
\end{align}

\bibliographystyle{plain}
\bibliography{references}

\end{document}